\documentclass[12pt]{article}
\usepackage[T1]{fontenc}
\usepackage[a4paper,margin=2.55cm]{geometry}
\usepackage{amsmath,amssymb,amsthm,mathtools,mathrsfs}
\usepackage{microtype}
\usepackage[hidelinks]{hyperref}
\usepackage{authblk}
\numberwithin{equation}{section}
\newtheorem{theorem}{Theorem}[section]
\newtheorem{proposition}[theorem]{Proposition}
\newtheorem{lemma}[theorem]{Lemma}
\newtheorem{corollary}[theorem]{Corollary}
\theoremstyle{definition}
\newtheorem{definition}[theorem]{Definition}

\newcommand{\Wres}{\operatorname{Wres}}
\newcommand{\Wresb}{\widetilde{\operatorname{Wres}}}
\newcommand{\trE}{\operatorname{tr}_{E}}
\newcommand{\trg}{\operatorname{tr}_{g}}
\newcommand{\Id}{\operatorname{Id}}
\newcommand{\dd}{\,\mathrm d}
\newcommand{\ii}{\mathrm i}
\newcommand{\eps}{\varepsilon}
\newcommand{\iot}{\iota}
\newcommand{\vol}{\operatorname{vol}}
\newcommand{\Div}{\operatorname{div}}
\newcommand{\Hess}{\operatorname{Hess}}
\newcommand{\grad}{\operatorname{grad}}

\newcommand{\antic}[2]{\{#1,#2\}}

\title{Hamiltonian-Type Spectral Functionals for\protect\\
Time-Dependent Perturbed Hodge--de Rham Operators}
\author[1]{Sining Wei\thanks{%
E-mail: \texttt{weisn835@nenu.edu.cn}.}}

\author[2]{Yong Wang\thanks{%
Corresponding author.
E-mail: \texttt{wangy581@nenu.edu.cn}.}}

\affil[1]{%
School of Data Science and Artificial Intelligence\\
Dongbei University of Finance and Economics,\\
Dalian 116025, China}

\affil[2]{%
School of Mathematics and Statistics\\
Northeast Normal University,\\
Changchun 130024, China}

\date{}

\begin{document}
\maketitle

\begin{abstract}
Let $g_t$ be a smooth family of Riemannian metrics on an oriented
manifold of even dimension $n\geq4$, and let
$D_t=d+\delta_{g_t}+\Psi_t$ on the fixed exterior bundle, with
$\Psi_t$ self-adjoint. We evaluate a scalar-weighted residue
functional obtained from the operator expression in Hawkins'
Hamiltonian action using the coefficient derivative $\partial_tD_t$.
The velocity contribution depends only on $\partial_tg_t$.
A direct symbol calculation reduces the lapse correction to a
divergence and a Dirichlet term; for a nonconstant test function,
integration by parts retains a mixed gradient term.
For compact manifolds with a warped collar, we specify compatible
factorizations of the weighted KKW term and of the commutator
correction. Their boundary residues are computed explicitly.
The collar-derivative and perturbation contributions cancel in
the weighted KKW factorization, and the remaining boundary
terms involve normal derivatives of the scalar weights.
The closed formula is recovered when the boundary is empty.
\end{abstract}

\noindent\textbf{Keywords.}
Time-dependent Hodge--de Rham operators;
Hamiltonian-type spectral functionals;
noncommutative residue;
Kastler--Kalau--Walze theorem;
manifolds with boundary.

\medskip
\noindent\textbf{MSC 2020.}
58J42, 58J40, 58J50.

\section{Introduction}
\label{sec:introduction}

The noncommutative residue relates the symbolic structure of an elliptic
operator to local Riemannian invariants. The Kastler--Kalau--Walze
theorem is a basic instance of this relation: a negative power of the
Dirac operator determines the Einstein--Hilbert integral
\cite{Kastler1995,KalauWalze1995}. For the Hodge--de Rham operator the
same construction is available on the exterior bundle, without a spin
structure. Replacing a single metric by a time-dependent family raises
a further question. Which geometric functional is obtained when the
Dirac operator and its time derivative enter the residue together?

A reason for considering this question comes from a spacelike
foliation. On a product $I\times M$, a Lorentzian metric with vanishing
shift can be written as $-N^2\dd t^2+g_t$, where each $g_t$ is positive
definite. The Dirac operators of the spatial slices are elliptic even
though the spacetime Dirac operator is not elliptic in the Riemannian
sense. Residues can therefore be calculated on $M$ at a fixed time.
The lapse $N$ and the variation of $g_t$ then carry information that is
absent from the ordinary, time-independent KKW formula. In this
description $n$ denotes the dimension of a spatial slice; the product
has dimension $n+1$.

Hawkins expressed the gravitational Lagrangian in terms of a family of
spatial Dirac operators, its evolution, and the lapse
\cite{Hawkins1997}. His formula contains a fourth-order velocity
insertion and a term of the form
$ND^2N^{-1}D^2N|D|^{-n-2}$. These terms suggest a residue calculation
for other Dirac-type operators. There is, however, a choice that has
to be made before such a calculation is performed. A family of bundles
and operators does not by itself specify a time derivative of the
operators. In Hawkins' construction the identification is obtained
from the spacetime Dirac equation. On the exterior bundle one can
instead use its metric-independent underlying vector bundle and
differentiate the coefficients of $d+\delta_{g_t}$. These two
derivatives need not agree. We use the latter derivative in this paper,
and use the term \emph{Hamiltonian-type} to distinguish the resulting
functional from a reconstruction of the ADM action.

Several related constructions provide the analytic and geometric
background. The original residue formulas of Kastler and of Kalau and
Walze identify the scalar-curvature term for spin Dirac operators
\cite{Kastler1995,KalauWalze1995}; the heat-coefficient interpretation
and generalized Lichnerowicz formulas explain how a zero-order
perturbation changes that term \cite{Ackermann1996,AckermannTolksdorf1996}.
Hawkins' formula gives the particular combination of operator products
that we investigate \cite{Hawkins1997}. Van den Dungen develops the
relation between families of spectral triples and foliations of
space(time), including the operator-theoretic role of a lapse
\cite{Dungen2018}. These results motivate the use of an elliptic
operator on each slice, but do not identify arbitrary choices of
temporal differentiation. The dependence of Dirac operators on the
identification of bundles under metric variation is also central in
the work of Bourguignon and Gauduchon \cite{BourguignonGauduchon1992}.
For the Hodge operator, D\k{a}browski, Sitarz, and Zalecki calculate
spectral metric and Einstein functionals \cite{DSZ2025}, and Bochniak,
D\k{a}browski, Sitarz, and Zalecki treat geometric spectral functionals
for perturbed operators and for torsion \cite{BDSZ2026}. Their
scalar-curvature formulas supply the static curvature terms used
here; we give the corresponding traced-potential calculation in
our conventions. Spectral forms for perturbed Hodge operators are
also studied in \cite{WangWangLiu2025}.

Our setting is a fixed closed oriented manifold $M$, a smooth family
$g_t$, and a smooth positive function $N$ on $I\times M$. We work on
$E=\Lambda^*T^*M\otimes\mathbb C$ with
$D_t=d+\delta_{g_t}+\Psi_t$, where $\Psi_t$ is pointwise self-adjoint
for the metric induced by $g_t$. The derivative $\dot D_t$ is taken in
the fixed bundle $E$. For a smooth real test function $f$, the functional $\mathscr L_t(f)$ is defined in
Definition~\ref{def:functional}. It uses the coefficients in
\cite[(8)]{Hawkins1997}, with the ordinary fiber trace and the residue
normalization specified below. The principal results evaluate this
functional, rather than assume that it has a prescribed classical
interpretation.

Theorem~\ref{thm:general-main} expresses this functional in terms of
the metric velocity, scalar curvature, a quadratic contraction of
the perturbation, and spatial derivatives of the lapse and test
function. The velocity contribution depends only on the metric
variation. Neither temporal nor spatial derivatives of the
perturbation remain in the final formula. For a nonconstant test
function, the divergence in the local lapse term contributes a
mixed gradient term after integration by parts.

We compute the lapse correction in the coordinate exterior frame,
retaining the first-order coefficient of the perturbed square even
at a normal-coordinate center. Its contributions cancel after
spherical integration. When the test function is constant, the
trace property and Hawkins' commutator rearrangement
\cite[Section~7, preceding (8)]{Hawkins1997} give an independent
verification of the integrated correction.

For compact spatial slices with boundary, we use the
noncommutative residue of \cite{FGLS1996} and the boundary symbol
method of \cite{Wang2007,WangWang2025Torsion}. With the
factorizations specified in Section~\ref{sec:boundary}, the
collar-derivative and perturbation terms cancel in the weighted
KKW contribution. The remaining singular Green residues involve
normal derivatives of the scalar weights. Combining these
residues with the boundary term from integration by parts gives
Theorem~\ref{thm:boundary-main}.

The paper is organized as follows.
Section~\ref{sec:preliminaries} gives the geometric and analytic
preliminaries.
Section~\ref{sec:residue-calculations} computes the three interior
residue contributions, derives the closed Hamiltonian-type formula,
and treats metric-connection perturbations.
Section~\ref{sec:boundary} computes the boundary residues and
proves the corresponding formula for manifolds with boundary.

\section{Preliminaries}
\label{sec:preliminaries}

We give the conventions for the exterior bundle and for differentiating
a family of operators. We then specify the residue functional and the
heat-coefficient normalization needed for its evaluation. Unless a time
derivative is displayed, every differential operator and every
geometric quantity is evaluated on a single slice.

\subsection{The exterior bundle and a family of Hodge operators}

Let $M$ be a closed oriented smooth manifold of dimension $n=2m\geq4$,
and let $I$ be an interval. A smooth family of Riemannian metrics on
$M$ is denoted by $g_t$. The complex vector bundle
$E=\Lambda^*T^*M\otimes\mathbb C$ has rank $2^n$ and does not depend
on $t$. Its Hermitian metric and its $L^2$ inner product do depend on
$g_t$. The notation $\trE$ always means the ordinary trace on the
whole fiber, so that $\trE(\Id_E)=2^n$.

Let $\alpha,\beta$ be one-forms, let $X,Y$ be vector fields, and let
$\omega$ be a $p$-form. Exterior multiplication is
$\eps(\alpha)\omega=\alpha\wedge\omega$.
For $p\geq1$, interior multiplication is defined by
$(\iot(X)\omega)(X_1,\ldots,X_{p-1})=
\omega(X,X_1,\ldots,X_{p-1})$, and it is zero on functions.
These operations satisfy
\begin{align}
 \eps(\alpha)\eps(\beta)+\eps(\beta)\eps(\alpha)&=0,\notag\\
 \iot(X)\iot(Y)+\iot(Y)\iot(X)&=0,\notag\\
 \eps(\alpha)\iot(X)+\iot(X)\eps(\alpha)
 &=\alpha(X)\Id_E.\label{eq:CAR}
\end{align}
Both operations are defined before a metric is chosen. For a metric
$g$, the vector $\alpha^{\sharp_g}$ is determined by
$g(\alpha^{\sharp_g},X)=\alpha(X)$. We use Clifford multiplication
by covectors in the form
\begin{equation*}
 c_g(\alpha)=\eps(\alpha)-\iot(\alpha^{\sharp_g}),
 \qquad
 \antic{c_g(\alpha)}{c_g(\beta)}
 =-2g^{-1}(\alpha,\beta)\Id_E.
\end{equation*}
Here $\{A,B\}=AB+BA$. The two terms in $c_g$ are adjoints of one
another, and consequently $c_g(\alpha)^*=-c_g(\alpha)$ for real
$\alpha$.

In local coordinates $x=(x^1,\ldots,x^n)$ we write
$\partial_i=\partial/\partial x^i$ and
$g_{ij}=g(\partial_i,\partial_j)$; $(g^{ij})$ is the inverse matrix.
Indices range from $1$ to $n$, and repeated indices are summed.
The induced Levi--Civita connection on $E$ is denoted by
$\nabla^{E,g}$. We use its standard exterior representation
\cite{BGV1992}; on the coordinate exterior frame it has the form
\begin{equation}
\label{eq:exterior-connection}
 \nabla^{E,g}_{\partial_i}=\partial_i+\omega_i,
 \qquad
 \omega_i=-\Gamma(g)^a{}_{ib}\eps(dx^b)\iot(\partial_a).
\end{equation}
Indeed, the dual connection gives
$\nabla_{\partial_i}dx^a=-\Gamma(g)^a{}_{ib}dx^b$.
The operator $\eps(dx^b)\iot(\partial_a)$ replaces each occurrence
of $dx^a$ in a basis form by $dx^b$. Its sign from contraction is
cancelled by moving $dx^b$ back to that position. The Leibniz rule on
exterior products therefore gives \eqref{eq:exterior-connection}.

The codifferential is the formal adjoint $\delta_g$ of $d$ for this
metric. The torsion-free identities are
\begin{equation}
\label{eq:Hodge-coordinate}
 d=\eps(dx^i)\partial_i,\qquad
 \delta_g=-g^{ij}\iot(\partial_i)\nabla^{E,g}_{\partial_j},
 \qquad D_g^0=d+\delta_g.
\end{equation}
For an increasing multi-index $J=(j_1,\ldots,j_p)$, write
$dx^J=dx^{j_1}\wedge\cdots\wedge dx^{j_p}$. If
$\alpha=\sum_J a_Jdx^J$, then
\begin{align*}
 d\alpha
 &=\sum_J\bigl(da_J\wedge dx^J+a_Jd(dx^J)\bigr)\\
 &=\sum_{i,J}(\partial_i a_J)dx^i\wedge dx^J
 =\eps(dx^i)\partial_i\alpha,
\end{align*}
because $d(dx^J)=0$. By the symmetry of the Christoffel symbols,
\[
 \sum_j\eps(dx^j)\omega_j
 =-\Gamma(g)^a{}_{jb}\eps(dx^j)\eps(dx^b)\iot(\partial_a)=0.
\]
Consequently, \eqref{eq:Hodge-coordinate} gives
\[
 D_g^0
 =\bigl(\eps(dx^j)-g^{ij}\iot(\partial_i)\bigr)
                      \nabla^{E,g}_{\partial_j}
 =c_g(dx^j)\nabla^{E,g}_{\partial_j}.
\]
With $\sigma(\partial_j)=\ii\xi_j$ and
$\xi=\xi_jdx^j$, we obtain
\[
 \sigma_1(D_g^0)=\ii c_g(\xi),
 \qquad
 \sigma_2((D_g^0)^2)
 =(\ii c_g(\xi))^2=|\xi|_g^2\Id_E.
\]

We consider a smooth endomorphism $\Psi_t$ of $E$ that is
self-adjoint for $g_t$, and set
\begin{equation}
\label{eq:Dt}
 D_t=D_{g_t}^0+\Psi_t.
\end{equation}
At each fixed time this is a formally self-adjoint elliptic
first-order operator. Its self-adjoint closure on
$L^2(M,E;g_t)$ has compact resolvent. A zero-order perturbation is
bounded on this Hilbert space, so it does not change the domain of
the closure. We do not require $\Psi_t$ to be odd for the exterior
parity decomposition. When it is odd, $D_t$ is an odd operator as well.

At a fixed time, write $g=g_t$ and $\Psi=\Psi_t$, and define
\begin{equation}
\label{eq:vPsi}
 v_\Psi=2^{-n}\trE\left(
 \Psi^2+\frac14g_{ij}
       \antic{c_g(dx^i)}{\Psi}\antic{c_g(dx^j)}{\Psi}
 \right).
\end{equation}
All products are endomorphism compositions. For a
$g$-orthonormal coframe $e^1,\ldots,e^n$ at a point,
\[
 g_{ij}\antic{c_g(dx^i)}{\Psi}\antic{c_g(dx^j)}{\Psi}
 =\sum_a\antic{c_g(e^a)}{\Psi}^{\,2},
\]
so the contraction in \eqref{eq:vPsi} is independent of coordinates.
The adjoint relations are
\[
 \Psi^*=\Psi,\qquad c_g(dx^i)^*=-c_g(dx^i),\qquad
 \antic{c_g(dx^i)}{\Psi}^{*}=-\antic{c_g(dx^i)}{\Psi}.
\]
Hence cyclicity gives
\begin{align*}
 \overline{\trE\!\left(
    \antic{c_g(dx^i)}{\Psi}\antic{c_g(dx^j)}{\Psi}\right)}
 &=\trE\!\left(
    \antic{c_g(dx^j)}{\Psi}^{*}
    \antic{c_g(dx^i)}{\Psi}^{*}\right)\\
 &=\trE\!\left(
    \antic{c_g(dx^j)}{\Psi}\antic{c_g(dx^i)}{\Psi}\right)\\
 &=\trE\!\left(
    \antic{c_g(dx^i)}{\Psi}\antic{c_g(dx^j)}{\Psi}\right).
\end{align*}
Together with $\overline{\trE\Psi^2}=\trE\Psi^2$ and
$g_{ij}\in\mathbb R$, this proves $\overline{v_\Psi}=v_\Psi$.
No sign condition is imposed on $v_\Psi$.

For the Levi--Civita connection $\nabla^g$, we use
\[
 R^g(X,Y)Z
 =\nabla_X^g\nabla_Y^g Z-\nabla_Y^g\nabla_X^g Z
                   -\nabla_{[X,Y]}^g Z.
\]
The Ricci tensor and scalar curvature are its contractions:
\[
 \operatorname{Ric}_g(Y,Z)
 =g^{ij}g(R^g(\partial_i,Y)Z,\partial_j),
 \qquad
 R_g=g^{ij}\operatorname{Ric}_g(\partial_i,\partial_j).
\]
Equivalently, for a $g$-orthonormal basis $e_1,\ldots,e_n$
at a point,
\[
 \operatorname{Ric}_g(Y,Z)=\sum_a g(R^g(e_a,Y)Z,e_a),
 \qquad R_g=\sum_a\operatorname{Ric}_g(e_a,e_a).
\]
For the Hodge operator the Weitzenb\"ock formula is
\begin{equation}
\label{eq:Weitzenbock}
 (D_g^0)^2=(\nabla^{E,g})^*\nabla^{E,g}+\mathcal R_g,
 \qquad \trE\mathcal R_g=2^{n-2}R_g.
\end{equation}
The trace on $p$-forms is
$\binom{n-2}{p-1}R_g$, with a binomial coefficient equal to zero
outside its range. Summing over $p$ gives the second identity.
These are the usual Hodge heat-kernel conventions
\cite{BGV1992,Gilkey1995,DSZ2025}.

\subsection{Temporal differentiation and the functional}

The differentiable structure of $M$ and the bundle $E$ are fixed.
For a time-dependent section $s_t$, its derivative is taken in this
fixed bundle. We define
$(\dot D_t)s=\partial_t(D_ts)$ for a section $s$ independent of $t$.
Thus, if in a fixed local frame
$D_t=A^j(t,x)\partial_j+B(t,x)$, then
$\dot D_t=(\partial_tA^j)\partial_j+\partial_tB$.
Transition functions between such frames do not depend on $t$;
this definition is consequently global. It does not assert that
$\dot D_t$ is self-adjoint in $L^2(M,E;g_t)$.

Write $\dot g_t=\partial_tg_t$. Differentiating
$g_t^{ia}g_{t,aj}=\delta^i_j$ yields
\[
 (\partial_tg_t^{ia})g_{t,aj}
       +g_t^{ia}(\partial_tg_{t,aj})=0.
\]
Multiplying by $g_t^{jb}$ and relabelling the free indices gives
\begin{equation}
\label{eq:inverse-velocity}
 \partial_tg_t^{ij}
 =-g_t^{ia}(\partial_tg_{t,ab})g_t^{bj}.
\end{equation}
At fixed time we abbreviate $g_t,\dot g_t,D_t,\Psi_t$ by
$g,\dot g,D,\Psi$. With $\dot g_{ij}=\partial_tg_{t,ij}$,
\[
 \trg\dot g=g^{ij}\dot g_{ij},
 \qquad
 |\dot g|_g^2=g^{ia}g^{jb}\dot g_{ij}\dot g_{ab}.
\]
All differentials of $N$ without a dot are spatial:
$\dd N=(\partial_iN)dx^i$.

For the self-adjoint operator $D$, set $|D|=(D^2)^{1/2}$.
If $D\phi_\lambda=\lambda\phi_\lambda$, spectral calculus gives
\[
 |D|\phi_\lambda=|\lambda|\phi_\lambda,
 \qquad
 |D|^{-a}\phi_\lambda=
 \begin{cases}
 |\lambda|^{-a}\phi_\lambda,&\lambda\ne0,\\
 0,&\lambda=0,
 \end{cases}
 \quad a>0.
\]
Thus $|D|^{-a}$ is a classical pseudodifferential operator of
order $-a$ \cite{Seeley1967}. For the even powers used below,
it may equivalently be replaced by a parametrix of the
corresponding power of $D^2$. Spectral calculus also gives
$D^2|D|^{-a}=|D|^{-a}D^2$; the parametrix identity holds modulo
smoothing operators. All calculations are at fixed $t$ and
do not differentiate the inverse powers.

We apply the operator expression of \cite[(8)]{Hawkins1997}
to the Hodge family \eqref{eq:Dt}, using the fixed-bundle
coefficient derivative defined above.

\begin{definition}[Hamiltonian-type residue functional]
\label{def:functional}
For the family \eqref{eq:Dt}, a smooth positive function $N$ on
$I\times M$, and a smooth real function $f$ on $I\times M$, define
\begin{align}
 \mathscr L_t(f)={}&
 \Wres\!\left[
 fN^{-1}|D|^{-n-4}
 \left(\frac{n(n+2)}8(D\dot D)^2
       +\frac{n(n-4)}8D^2\dot D^2\right)\right]\notag\\
 &+\left(\frac6{n-2}+\frac{n(n-2)}{16}\right)
       \Wres(fN|D|^{2-n})\notag\\
 &-\frac{n(n-2)}{16}
       \Wres(fND^2N^{-1}D^2N|D|^{-n-2}).
 \label{eq:functional}
\end{align}
The products are compositions in the displayed order and
$(D\dot D)^2=D\dot D D\dot D$. On a compact interval
$[t_0,t_1]\subset I$ the action is
$\mathscr A(f)=\int_{t_0}^{t_1}\mathscr L_t(f)\dd t$.
The test function $f$ is real and may change sign. At each time
$\mathscr L_t(af_1+bf_2)=a\mathscr L_t(f_1)+b\mathscr L_t(f_2)$
for $a,b\in\mathbb R$, and $f=1$ gives the unweighted functional.
\end{definition}

We note that, for a general family spectral triples as in \cite{Dungen2018}, we may define a similar Hamiltonian
gravity functional. When $D$ is the Dirac operator and $f=1$, we get the Hamiltonian
gravity in \cite{Hawkins1997}.

Throughout, $n=2m\geq4$ and $\dot D_t=\partial_tD_t$ in the
fixed exterior bundle. In particular,
\[
 |D|^{2-n}=(D^2)^{1-m},\qquad
 |D|^{-n-2}=(D^2)^{-m-1},\qquad
 |D|^{-n-4}=(D^2)^{-m-2}.
\]
More generally, \eqref{eq:functional} defines a linear functional
for a smoothly identified family of spectral triples whenever
$\dot D_t$ and a residue trace on the displayed products are defined.

\subsection{Symbols, residues, and heat coefficients}

In a local trivialization, the full symbol of a classical operator
$A$ of integer order $a$ has homogeneous components
$\sigma(A)\sim\sum_{j\geq0}\sigma_{a-j}(A)$ for $\xi\ne0$.
For the left symbol convention the standard composition rule
\cite{Gilkey1995} is
\begin{equation}
\label{eq:composition}
 \sigma(AB)\sim
 \sum_\alpha\frac{1}{\alpha!}
 (\partial_\xi^\alpha\sigma(A))
 (D_x^\alpha\sigma(B)),
 \qquad D_{x_j}=\frac1\ii\partial_{x_j}.
\end{equation}
Here $\alpha=(\alpha_1,\ldots,\alpha_n)$ is a multi-index of
nonnegative integers, $|\alpha|=\sum_j\alpha_j$, and
$\alpha!=\prod_j\alpha_j!$. Differentiation in $\xi$ lowers the
homogeneity by $|\alpha|$; differentiation in $x$ does not.

For a smooth scalar multiplier $f$ and a classical operator $T$,
the composition rule gives
\[
 \sigma(fT)
 \sim\sum_\alpha\frac1{\alpha!}
          (\partial_\xi^\alpha f)D_x^\alpha\sigma(T)
 =f\sigma(T),
\]
since $\partial_\xi^\alpha f=0$ for $|\alpha|>0$.

We use the noncommutative residue in the normalization
\begin{equation}
\label{eq:Wres}
 \Wres(A)=\int_M\int_{|\xi|_g=1}
       \trE\sigma_{-n}(A)(x,\xi)
       \dd S_g(\xi)\dd\vol_g(x).
\end{equation}
The expression defines the invariant residue density; normal
coordinates and a local bundle frame can be used to compute it.
In particular, the result does not depend on the auxiliary metric
used to describe the cosphere. We write $\nu_{n-1}=\vol(S^{n-1})$.
The residue is a trace and vanishes on smoothing operators
\cite{Wodzicki1987}. It also vanishes on any classical operator
of order strictly smaller than $-n$.

\begin{lemma}[Spherical moments]
\label{lem:spherical-moments}
Let $\dd S$ be Euclidean surface measure on $S^{n-1}$ and
$\nu_{n-1}=\vol(S^{n-1})$. Then
\begin{align}
 \int_{S^{n-1}}\xi_i\xi_j\dd S
 &=\frac{\nu_{n-1}}n\delta_{ij},\notag\\
 \int_{S^{n-1}}\xi_i\xi_j\xi_k\xi_l\dd S
 &=\frac{\nu_{n-1}}{n(n+2)}
 (\delta_{ij}\delta_{kl}+\delta_{ik}\delta_{jl}
                         +\delta_{il}\delta_{jk}).
 \label{eq:sphere-moments}
\end{align}
For a symmetric real matrix $H=(H_{ij})$, put
$\operatorname{tr}H=\sum_iH_{ii}$ and
$\operatorname{tr}(H^2)=\sum_{i,j}H_{ij}H_{ji}$. Then
\begin{equation}
\label{eq:sphere-h}
 \int_{S^{n-1}}\left(\sum_{i,j}H_{ij}\xi_i\xi_j\right)^2\dd S
 =\frac{\nu_{n-1}}{n(n+2)}
       \bigl((\operatorname{tr}H)^2+2\operatorname{tr}(H^2)\bigr).
\end{equation}
No definiteness assumption on $H$ is required.
\end{lemma}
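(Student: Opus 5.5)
The plan is to reduce every moment to a scalar identity by symmetry, fix the unknown constants by taking traces, and then read off \eqref{eq:sphere-h} by contracting with $H$.

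\emph{Second moments.} The measure $\dd S$ is invariant under the orthogonal group. Reflecting $\xi_i\mapsto-\xi_i$ shows that the second moment vanishes for $i\neq j$. Permuting coordinates shows that all diagonal moments coincide. Hence
\[
 \int_{S^{n-1}}\xi_i\xi_j\dd S=c_2\delta_{ij}.
\]
Summing over $i=j$ and using $\sum_i\xi_i^2=1$ on the sphere gives $nc_2=\nu_{n-1}$.

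\emph{Fourth moments.} The tensor $T_{ijkl}=\int\xi_i\xi_j\xi_k\xi_l\dd S$ is totally symmetric. Under each reflection it vanishes unless every index occurs an even number of times. So only two kinds of entries survive: $a=\int\xi_1^4\dd S$ and $b=\int\xi_1^2\xi_2^2\dd S$, with permutation invariance giving the same values for other index choices.

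The key step is to show $a=3b$. Apply the rotation by $\pi/4$ in the $(\xi_1,\xi_2)$-plane, and use invariance of $\int\xi_1^4\dd S$. Expanding $\bigl((\xi_1+\xi_2)/\sqrt2\bigr)^4$ and discarding the odd terms gives
\[
 a=\tfrac14(2a+6b),
\]
so $a=3b$. Once $a=3b$ holds, $T_{ijkl}=b(\delta_{ij}\delta_{kl}+\delta_{ik}\delta_{jl}+\delta_{il}\delta_{jk})$ entrywise.

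To determine $b$, contract with $\delta_{ij}\delta_{kl}$. The left side becomes $\int|\xi|^4\dd S=\nu_{n-1}$, and the right side becomes $b(n^2+2n)$. Hence $b=\nu_{n-1}/(n(n+2))$, which is \eqref{eq:sphere-moments}.

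\emph{The quadratic-form identity.} Expand the square in \eqref{eq:sphere-h} as $\sum H_{ij}H_{kl}\xi_i\xi_j\xi_k\xi_l$ and insert the fourth-moment formula. The three Kronecker products contribute $(\operatorname{tr}H)^2$, $\sum H_{ij}H_{ij}$ and $\sum H_{ij}H_{ji}$ respectively. By symmetry of $H$ the last two both equal $\operatorname{tr}(H^2)$, which gives \eqref{eq:sphere-h}. Only linearity and symmetry of $H$ enter, so no definiteness assumption is needed.

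The only nontrivial point is the relation $a=3b$. As an alternative to the rotation argument, one can obtain it from Gaussian integrals: $\int_{\mathbb R^n}x_1^4e^{-|x|^2}\dd x$ is three times $\int_{\mathbb R^n}x_1^2x_2^2e^{-|x|^2}\dd x$, and the polar factorization of each integral carries the same radial factor.
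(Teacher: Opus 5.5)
Your proof is correct and follows the same route as the paper: orthogonal invariance fixes the tensor structure of the second and fourth moments, contraction using $\sum_i\xi_i^2=1$ determines the constants $\nu_{n-1}/n$ and $\nu_{n-1}/(n(n+2))$, and expanding the square against the fourth-moment formula gives \eqref{eq:sphere-h}. The only difference is that you derive the isotropic form $b(\delta_{ij}\delta_{kl}+\delta_{ik}\delta_{jl}+\delta_{il}\delta_{jk})$ explicitly, using reflections, permutations and the $\pi/4$ rotation that gives $a=3b$; the paper simply asserts this form from invariance.
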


\begin{proof}
Invariance under orthogonal transformations gives
$\int_{S^{n-1}}\xi_i\xi_j\dd S=a_2\delta_{ij}$ and
\[
 \int_{S^{n-1}}\xi_i\xi_j\xi_k\xi_l\dd S
 =a_4(\delta_{ij}\delta_{kl}
                 +\delta_{ik}\delta_{jl}+\delta_{il}\delta_{jk}).
\]
Since $\sum_i\xi_i^2=1$ on $S^{n-1}$, contraction yields
\begin{align*}
 na_2
 &=\int_{S^{n-1}}\sum_i\xi_i^2\dd S=\nu_{n-1},\\
 (n^2+2n)a_4
 &=\int_{S^{n-1}}\left(\sum_i\xi_i^2\right)^2\dd S
 =\nu_{n-1}.
\end{align*}
These identities prove \eqref{eq:sphere-moments}. Expanding
the quadratic form and applying its fourth-moment identity gives
\begin{align*}
 \int_{S^{n-1}}\left(\sum_{i,j}H_{ij}\xi_i\xi_j\right)^2\dd S
 &=\sum_{i,j,k,l}H_{ij}H_{kl}
          \int_{S^{n-1}}\xi_i\xi_j\xi_k\xi_l\dd S\\
 &=\frac{\nu_{n-1}}{n(n+2)}
   \left[\sum_{i,k}H_{ii}H_{kk}
        +\sum_{i,j}H_{ij}^2+\sum_{i,j}H_{ij}H_{ji}\right]\\
 &=\frac{\nu_{n-1}}{n(n+2)}
       \bigl((\operatorname{tr}H)^2+2\operatorname{tr}(H^2)\bigr),
\end{align*}
where the last equality uses $H_{ij}=H_{ji}$.
\end{proof}

At a $g$-normal-coordinate center the lemma applies to the
fixed matrix $H_{ij}=\dot g_{ij}$. In this case
$\operatorname{tr}H=\trg\dot g$ and
$\operatorname{tr}(H^2)=|\dot g|_g^2$.

The weighted Kastler--Kalau--Walze formula
\cite{Kastler1995,KalauWalze1995} takes the following form in
the normalization \eqref{eq:Wres}.

\begin{lemma}[Weighted residue and the second heat coefficient]
\label{lem:heat-residue}
Let $A$ be a nonnegative self-adjoint Laplace-type operator on $E$,
with scalar principal symbol $|\xi|_g^2\Id_E$. Write its Bochner
decomposition as $A=\nabla^*\nabla+V$. For a smooth scalar function $f$,
\begin{equation}
\label{eq:heat-residue}
 \Wres(fA^{-(n-2)/2})
 =\frac{n-2}{2}\nu_{n-1}
 \int_M f\,\trE\left(\frac16R_g\Id_E-V\right)\dd\vol_g.
\end{equation}
\end{lemma}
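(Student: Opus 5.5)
The plan is to express the residue of $fA^{-(n-2)/2}$ through the heat-kernel expansion of $e^{-sA}$ and read off the second Seeley--DeWitt coefficient. The steps are: a Mellin transform, a small-$s$ asymptotic, and the Gilkey formula for $a_2$.

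\emph{Residue as a zeta residue.} Since $f$ is a multiplication operator, $\sigma(fA^{-z})=f\sigma(A^{-z})$ by the composition rule \eqref{eq:composition}. The first step is the Wodzicki--Guillemin identification: for the holomorphic family $fA^{-z}$, the function $\operatorname{Tr}(fA^{-z})$ is meromorphic. At $z=(n-2)/2$ it has a simple pole, with
\[
 \operatorname{Res}_{z=(n-2)/2}\operatorname{Tr}(fA^{-z})
 =\frac{1}{2}\,(2\pi)^{-n}\Wres(fA^{-(n-2)/2}).
\]
The factor $1/\text{order}(A)=1/2$ arises from differentiating the symbol order $-2z$ in $z$, and $(2\pi)^{-n}$ accounts for the normalization \eqref{eq:Wres}, which omits $(2\pi)^{-n}$. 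The harmless issue of the kernel of $A$ is handled by the convention $|D|^{-a}=0$ on $\ker$: it contributes a finite-rank, hence smoothing, operator, which does not affect $\Wres$.

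\emph{Mellin transform and heat coefficient.} Write
\[
 \operatorname{Tr}(fA^{-z})=\Gamma(z)^{-1}\int_0^\infty s^{z-1}\operatorname{Tr}(fe^{-sA}\Pi)\dd s,
\]
where $\Pi$ projects off $\ker A$. Insert the small-time expansion
\[
 \operatorname{Tr}(fe^{-sA})\sim(4\pi s)^{-n/2}\sum_k s^k\int_M f\,\trE a_k\dd\vol_g.
\]
The term $k=1$ produces the pole at $z=(n-2)/2$, with residue
\[
 \Gamma\Bigl(\frac{n-2}{2}\Bigr)^{-1}(4\pi)^{-n/2}\int_M f\,\trE a_1.
\]
Gilkey's formula for $A=\nabla^*\nabla+V$ gives $a_1=\frac16R_g\Id_E-V$. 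Equating the two expressions for the residue gives
\[
 \Wres(fA^{-(n-2)/2})=\frac{2(2\pi)^n}{(4\pi)^{n/2}\Gamma(\frac{n-2}{2})}\int_M f\,\trE\Bigl(\frac16R_g-V\Bigr)\dd\vol_g.
\]

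\emph{Constant check.} This constant has to be simplified to the stated one, using $\nu_{n-1}=2\pi^{n/2}/\Gamma(n/2)$ and $\Gamma(n/2)=\frac{n-2}{2}\Gamma(\frac{n-2}{2})$. Doing so gives
\[
 \frac{2\pi^{n/2}}{\Gamma(\frac{n-2}{2})}=\frac{n-2}{2}\,\nu_{n-1},
\]
which is exactly \eqref{eq:heat-residue}.

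\emph{Main obstacle.} The delicate step is keeping the normalizations consistent: the factor $(2\pi)^{-n}$ in the residue--zeta relation versus \eqref{eq:Wres}, and the factor $1/2$ from the order. An alternative that avoids the zeta machinery is to compute $\sigma_{-n}(A^{-(n-2)/2})$ directly at a normal-coordinate center and integrate with Lemma~\ref{lem:spherical-moments}. That route is more laborious, so I would use it only to cross-check the constant in a flat example, $A=\Delta+V$ with constant $V$. There $\sigma_{-n}$ is the term $-\frac{n-2}{2}V|\xi|^{-n}$ coming from the binomial expansion of $(|\xi|^2+V)^{-(n-2)/2}$, giving $-\frac{n-2}{2}\nu_{n-1}\int f\trE V$. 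This confirms the coefficient of $V$, and the $R_g/6$ term then follows from Gilkey's universal ratio in $a_1$.
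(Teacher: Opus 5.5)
Your proposal is correct and is essentially the paper's proof. The paper likewise combines the residue--heat-coefficient identity with Gilkey's coefficient $\frac16R_g\Id_E-V$: it cites that identity from Ackermann, whereas you rederive it via the Wodzicki--Guillemin zeta residue and the Mellin transform. Like you, it notes that $f$ only multiplies the diagonal heat coefficient and that the kernel projection is smoothing, and it simplifies the same constant $2(2\pi)^n(4\pi)^{-n/2}/\Gamma(\tfrac{n-2}{2})=\tfrac{n-2}{2}\nu_{n-1}$; your flat-space check of the $V$ coefficient is a valid extra consistency test but not needed.
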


\begin{proof}
The residue--heat coefficient identity \cite{Ackermann1996},
with the standard
second coefficient for $A=\nabla^*\nabla+V$
\cite{BGV1992,Gilkey1995}, gives the integral in
\eqref{eq:heat-residue} with coefficient
\[
 \frac{2(2\pi)^n(4\pi)^{-n/2}}{\Gamma((n-2)/2)}
 =\frac{2\pi^{n/2}}{\Gamma((n-2)/2)}
 =\frac{n-2}{2}\nu_{n-1}.
\]
Here $\nu_{n-1}=2\pi^{n/2}/\Gamma(n/2)$. The scalar
multiplier $f$ multiplies the diagonal heat coefficient,
so no derivative of $f$ occurs. The kernel projection is smoothing
and does not change the residue.
\end{proof}

\section[The interior residues and the Hamiltonian-type formula]
{The interior residues and the Hamiltonian-type formula}
\label{sec:residue-calculations}

We evaluate the velocity, curvature, and lapse terms in
\eqref{eq:functional} and combine them to obtain the closed
Hamiltonian-type formula. The velocity terms have total order $-n$
and are determined by principal symbols. The weighted curvature
term uses the second heat coefficient. For the lapse correction,
we give a direct third-order symbol calculation and a verification
using Hawkins' commutator rearrangement. The final subsection
derives the total formula and identifies the zero-order
perturbation induced by a change of metric connection.

\subsection{The velocity terms}

We first record the full variation of the operator. This
identifies the terms that will subsequently disappear by
homogeneity and avoids imposing a special spatial constancy
assumption on the family $g_t$.

\begin{lemma}[Variation in a fixed coordinate exterior frame]
\label{lem:full-variation}
Let $\dot g=\partial_tg$.
The derivative of the Levi--Civita connection has components
\begin{equation}
\label{eq:Gamma-variation}
 \partial_t\Gamma(g)^a{}_{jb}
 =\frac12g^{al}
       \left(\nabla_j\dot g_{bl}+\nabla_b\dot g_{jl}
                                      -\nabla_l\dot g_{jb}\right).
\end{equation}
Here $\nabla$ denotes the Levi--Civita covariant derivative
for the metric at the given time. The full operator derivative is
\begin{equation}
\label{eq:full-D-variation}
 \dot D=g^{ia}g^{jb}\dot g_{ab}\iot(\partial_i)\nabla^{E,g}_{\partial_j}
 +g^{ij}\partial_t\Gamma(g)^a{}_{jb}
           \iot(\partial_i)\eps(dx^b)\iot(\partial_a)
 +\dot\Psi.
\end{equation}
\end{lemma}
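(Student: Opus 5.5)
The plan is to differentiate the coordinate expression of $D_t$ term by term in a fixed coordinate exterior frame, using that $\eps(dx^b)$ and $\iot(\partial_a)$ are defined without a metric and hence independent of $t$. By \eqref{eq:Hodge-coordinate}, \eqref{eq:exterior-connection} and \eqref{eq:Dt}, in such a frame
\[
 D_t=\eps(dx^j)\partial_j-g^{ij}\iot(\partial_i)\partial_j
 +g^{ij}\Gamma(g)^a{}_{jb}\iot(\partial_i)\eps(dx^b)\iot(\partial_a)+\Psi_t .
\]
This has the form $A^j\partial_j+B$ of the definition of $\dot D_t$. Hence $\dot D$ is obtained by differentiating the coefficients $g^{ij}$, $\Gamma(g)^a{}_{jb}$ and $\Psi_t$, while the operators $\partial_j$, $\eps$ and $\iot$ are left unchanged.

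\emph{Step 1: the Christoffel variation \eqref{eq:Gamma-variation}.} Write
\[
 \Gamma^a{}_{jb}=\tfrac12 g^{al}\bigl(\partial_jg_{bl}+\partial_bg_{jl}-\partial_lg_{jb}\bigr).
\]
The difference of two connections is a tensor, so $\partial_t\Gamma$ is a tensor and it suffices to verify \eqref{eq:Gamma-variation} at the center of $g$-normal coordinates at the given time. There $\Gamma=0$, so the term coming from $\partial_tg^{al}$ drops out. The remaining term is $\tfrac12g^{al}(\partial_j\dot g_{bl}+\partial_b\dot g_{jl}-\partial_l\dot g_{jb})$, and at the center each $\partial$ may be replaced by $\nabla$. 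Both sides are tensorial, so the identity then holds in every chart.

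\emph{Step 2: the operator derivative \eqref{eq:full-D-variation}.} Differentiating the displayed expression gives three contributions.
\begin{itemize}
 \item The $\partial_j$-coefficient gives $-\partial_tg^{ij}\,\iot(\partial_i)\partial_j$. By \eqref{eq:inverse-velocity} this equals $g^{ia}g^{jb}\dot g_{ab}\,\iot(\partial_i)\partial_j$.
 \item The $\Gamma$-term splits into two parts. The part with $\partial_tg^{ij}$ gives $g^{ia}g^{jb}\dot g_{ab}\,\iot(\partial_i)\omega_j$. It combines with the previous item to produce $g^{ia}g^{jb}\dot g_{ab}\,\iot(\partial_i)\nabla^{E,g}_{\partial_j}$. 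The part with $\partial_t\Gamma$ gives exactly $g^{ij}\partial_t\Gamma(g)^a{}_{jb}\,\iot(\partial_i)\eps(dx^b)\iot(\partial_a)$.
 \item The perturbation gives $\dot\Psi$.
\end{itemize}
Summing these contributions yields \eqref{eq:full-D-variation}.

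The expected main obstacle is bookkeeping rather than analysis. First, the signs must be tracked: one from $-g^{ij}$ in $\delta_g$, one from the minus sign in $\omega_i$, and one from \eqref{eq:inverse-velocity}. Second, one must justify regrouping $\partial_j+\omega_j$ into $\nabla^{E,g}_{\partial_j}$ inside the derivative. I would confirm both by checking that the final expression is frame-independent: the first term is $\iot(\dot g^{\sharp}\,\cdot)\nabla$ contracted invariantly, and the second is tensorial by Step 1. This confirms that the local computation defines the global fixed-bundle derivative.
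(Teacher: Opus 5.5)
Your proof is correct. Step 2 is essentially the paper's computation. The paper applies the product rule to $\delta_g=-g^{ij}\iot(\partial_i)\nabla^{E,g}_{\partial_j}$ with $\nabla^{E,g}_{\partial_j}$ kept as a unit, so the $\partial_tg^{ij}$ term is attached to the full connection from the start. You expand into $\partial_j$ and $\omega_j$ and recombine, which is the same algebra, so nothing further needs justifying there.

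Step 1 takes a different route. The paper argues invariantly: with $C(X,Y)=\partial_t(\nabla^g_XY)$, torsion-freeness gives $C(X,Y)=C(Y,X)$. Differentiating metric compatibility gives $(\nabla_X\dot g)(Y,Z)=g(C(X,Y),Z)+g(C(X,Z),Y)$, and a Koszul-type cyclic combination then solves for $C$. You instead differentiate the coordinate formula for $\Gamma$ at the center of a $t$-independent chart that is normal for $g$ at the given time. There the $\partial_tg^{al}$ term equals $\partial_tg^{al}g_{lk}\Gamma^k{}_{jb}=0$, and $\partial$ may be replaced by $\nabla$. You then globalize because $\partial_t\Gamma$ is a limit of differences of connections and hence a tensor.

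Your version is shorter and matches the paper's later convention of fixing coordinates in $t$ and choosing them normal at one time and point. The paper's version needs no chart or separate tensoriality argument, and it exhibits the formula directly as the variation of the Koszul formula. Your final frame-independence check is not needed. The paper already observes that the coefficient derivative is global because the transition functions do not depend on $t$.
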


\begin{proof}
For vector fields independent of time, set
$C(X,Y)=\partial_t(\nabla^g_XY)$ during this proof.
Differentiating the torsion-free identity gives
$C(X,Y)=C(Y,X)$. Metric compatibility gives
\[
 X(g(Y,Z))=g(\nabla_XY,Z)+g(Y,\nabla_XZ).
\]
Differentiation in $t$, followed by the definition of the
covariant derivative of $\dot g$, yields
\[
 (\nabla_X\dot g)(Y,Z)=g(C(X,Y),Z)+g(C(X,Z),Y).
\]
The corresponding identities with $X,Y$ interchanged
and with $X,Z$ interchanged imply
\begin{align*}
 2g(C(X,Y),Z)
 ={}&(\nabla_X\dot g)(Y,Z)+(\nabla_Y\dot g)(X,Z)
                         -(\nabla_Z\dot g)(X,Y).
\end{align*}
Indeed, the two terms paired with $Y$ cancel by the
symmetry of $C$, and the same is true of the two terms
paired with $X$. Taking $X=\partial_j$, $Y=\partial_b$,
and $Z=\partial_l$, and then contracting with $g^{al}/2$,
proves \eqref{eq:Gamma-variation}.

The exterior derivative is independent of $t$. Using
\eqref{eq:Hodge-coordinate} and the fact that
$\iot(\partial_i)$ is independent of $t$, we obtain
\begin{align*}
 \partial_t\delta_g
 &=-\partial_tg^{ij}\iot(\partial_i)
                         \nabla^{E,g}_{\partial_j}
       -g^{ij}\iot(\partial_i)\partial_t\omega_j\\
 &=g^{ia}g^{jb}\dot g_{ab}\iot(\partial_i)\nabla^{E,g}_{\partial_j}
       +g^{ij}\partial_t\Gamma(g)^a{}_{jb}
                    \iot(\partial_i)\eps(dx^b)\iot(\partial_a).
\end{align*}
The second equality uses \eqref{eq:exterior-connection}
and \eqref{eq:inverse-velocity}. Adding $\dot\Psi$
proves \eqref{eq:full-D-variation}.
\end{proof}

Thus the spatial derivative of $\dot g$ occurs through
$\partial_t\Gamma$, but only as a zero-order coefficient
of $\dot D$. The endomorphism $\dot\Psi$ is another
zero-order coefficient. The first-order coefficient in
\eqref{eq:full-D-variation} depends on $\dot g$ without its
spatial derivatives. This is the precise separation
used in the following principal-symbol calculation.

Fix $t$ and a point $x_0$. Coordinates are held fixed when taking
$\partial_t$, and may be chosen to be normal for $g_t$ at this
particular time and point. Expanding
\eqref{eq:full-D-variation} with
$\nabla^{E,g}_{\partial_j}=\partial_j+\omega_j$ gives
\begin{align*}
 \dot D
 ={}&-(\partial_tg^{ij})\iot(\partial_i)\partial_j+\partial_t\bigl(g^{ij}\Gamma(g)^a{}_{jb}\bigr)
          \iot(\partial_i)\eps(dx^b)\iot(\partial_a)
   +\dot\Psi .
\end{align*}
In particular, the zero-order symbol at the normal-coordinate
center is
\[
 \sigma_0(\dot D)(x_0)
 =g^{ij}(x_0)\partial_t\Gamma(g)^a{}_{jb}(x_0)
          \iot(\partial_i)\eps(dx^b)\iot(\partial_a)
       +\dot\Psi(x_0).
\]
Here $\Gamma(g)(x_0)=0$, whereas
$\partial_t\Gamma(g)(x_0)$ need not vanish.
For a covector $\xi=\xi_jdx^j$, define the vector
$w=(\partial_tg^{ij})\xi_j\partial_i$. The principal symbols are
\begin{equation}
\label{eq:velocity-symbol}
 \sigma_1(D)=\ii c_g(\xi),\qquad
 \sigma_1(\dot D)=-\ii\iot(w),\qquad
 \xi(w)=-\dot g(\xi^{\sharp_g},\xi^{\sharp_g}).
\end{equation}
The notation $w$ is used only in the pointwise calculation below.

\begin{lemma}[Two principal-symbol contractions]
\label{lem:velocity-traces}
At $(x_0,\xi)$ one has
\begin{align}
 \trE\bigl(\sigma_1(D)\sigma_1(\dot D)
            \sigma_1(D)\sigma_1(\dot D)\bigr)
 &=2^{n-1}\bigl[\dot g(\xi^{\sharp_g},\xi^{\sharp_g})\bigr]^2,
 \label{eq:velocity-four}\\
 \sigma_1(\dot D)^2&=0.\notag
\end{align}
\end{lemma}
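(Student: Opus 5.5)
The plan is a purely pointwise fiber computation at $(x_0,\xi)$ that uses only the canonical anticommutation relations \eqref{eq:CAR} and the principal symbols in \eqref{eq:velocity-symbol}. No residue or normal-coordinate structure is needed beyond these formulas.

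\emph{The second identity.} By \eqref{eq:velocity-symbol}, $\sigma_1(\dot D)^2=(-\ii)^2\iot(w)\iot(w)=-\iot(w)^2$. The second relation in \eqref{eq:CAR} with $X=Y=w$ gives $2\iot(w)^2=0$, so this term vanishes.

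\emph{The first identity: reducing the product.} The scalar factors cancel: $\sigma_1(D)\sigma_1(\dot D)=(\ii c_g(\xi))(-\ii\iot(w))=c_g(\xi)\iot(w)$. Hence the quantity to compute is $\trE\bigl((c_g(\xi)\iot(w))^2\bigr)$. I would move the inner $\iot(w)$ past $c_g(\xi)=\eps(\xi)-\iot(\xi^{\sharp_g})$ using \eqref{eq:CAR}. We have $\iot(w)\eps(\xi)=-\eps(\xi)\iot(w)+\xi(w)$ and $\iot(w)\iot(\xi^{\sharp_g})=-\iot(\xi^{\sharp_g})\iot(w)$, so
\[
 \iot(w)c_g(\xi)=-c_g(\xi)\iot(w)+\xi(w)\Id_E .
\]
Substituting this and using $\iot(w)^2=0$ once more gives
\[
 (c_g(\xi)\iot(w))^2=-c_g(\xi)^2\iot(w)^2+\xi(w)\,c_g(\xi)\iot(w)=\xi(w)\,c_g(\xi)\iot(w).
\]

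\emph{The first identity: the remaining trace.} It remains to compute $\trE(c_g(\xi)\iot(w))=\trE(\eps(\xi)\iot(w))-\trE(\iot(\xi^{\sharp_g})\iot(w))$.

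The second trace vanishes. The operator $\iot(\xi^{\sharp_g})\iot(w)$ lowers form degree by $2$, so it has no diagonal blocks with respect to the degree grading of $E$.

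For the first trace, cyclicity gives $\trE(\eps(\xi)\iot(w))=\trE(\iot(w)\eps(\xi))$. Taking the trace of the third relation in \eqref{eq:CAR} then gives $2\trE(\eps(\xi)\iot(w))=\xi(w)\,2^n$, so $\trE(\eps(\xi)\iot(w))=2^{n-1}\xi(w)$.

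Combining the steps, $\trE\bigl((c_g(\xi)\iot(w))^2\bigr)=2^{n-1}\xi(w)^2$. Inserting $\xi(w)=-\dot g(\xi^{\sharp_g},\xi^{\sharp_g})$ from \eqref{eq:velocity-symbol} yields \eqref{eq:velocity-four}.

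The only step needing care is the sign bookkeeping when commuting $\iot(w)$ through $c_g(\xi)$. In particular, the $\iot$–$\iot$ part contributes nothing, because interior products anticommute and have no scalar anticommutator.
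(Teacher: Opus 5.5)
Your proposal is correct and follows the paper's proof essentially step for step. You use $\iot(w)^2=0$, the anticommutator $\iot(w)c_g(\xi)+c_g(\xi)\iot(w)=\xi(w)\Id_E$ to reduce $(c_g(\xi)\iot(w))^2$ to $\xi(w)c_g(\xi)\iot(w)$, and then the traces $\trE(\iot(\xi^{\sharp_g})\iot(w))=0$ (degree lowered by two) and $\trE(\eps(\xi)\iot(w))=2^{n-1}\xi(w)$ (from \eqref{eq:CAR} and cyclicity); the only addition is that you derive the anticommutator explicitly by splitting $c_g(\xi)$, which the paper states directly.
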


\begin{proof}
Interior multiplication satisfies $\iot(w)^2=0$ by
\eqref{eq:CAR}. Moreover,
$\iot(w)c_g(\xi)+c_g(\xi)\iot(w)=\xi(w)\Id_E$.
It follows that
\begin{align*}
 (c_g(\xi)\iot(w))^2
 &=c_g(\xi)\bigl(\xi(w)\Id_E-c_g(\xi)\iot(w)\bigr)\iot(w)\\
 &=\xi(w)c_g(\xi)\iot(w).
\end{align*}
The product $\iot(\xi^{\sharp_g})\iot(w)$ lowers form degree by
two and has trace zero. Taking the trace in the third identity of
\eqref{eq:CAR}, and using cyclicity, gives
$\trE(\eps(\xi)\iot(w))=2^{n-1}\xi(w)$.
Consequently,
\[
 \trE(c_g(\xi)\iot(w)c_g(\xi)\iot(w))
 =2^{n-1}\xi(w)^2.
\]
The factors $\ii$ and $-\ii$ in \eqref{eq:velocity-symbol}
have product one. Substitution of the last identity of
\eqref{eq:velocity-symbol} proves \eqref{eq:velocity-four}.
\end{proof}

\begin{proposition}[Velocity residues]
\label{prop:velocity}
For the coefficient derivative on the fixed exterior bundle,
\begin{align}
 \Wres\bigl(fN^{-1}|D|^{-n-4}(D\dot D)^2\bigr)
 &=\frac{2^{n-1}\nu_{n-1}}{n(n+2)}
   \int_M\frac{f}{N}\bigl((\trg \dot g)^2+2|\dot g|_g^2\bigr)\dd\vol_g,
 \label{eq:velocity-result}\\
 \Wres\bigl(fN^{-1}|D|^{-n-4}D^2\dot D^2\bigr)&=0.\notag
\end{align}
\end{proposition}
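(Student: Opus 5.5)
Both operators $fN^{-1}|D|^{-n-4}(D\dot D)^2$ and $fN^{-1}|D|^{-n-4}D^2\dot D^2$ have order $-n-4+4=-n$. By \eqref{eq:Wres}, the residue density is therefore the trace of the \emph{principal} symbol integrated over the cosphere. Repeated use of the composition rule \eqref{eq:composition} shows that in a product the term of top homogeneity is the product of the principal symbols. Any derivative in $\xi$, or any lower-order component of a factor, lowers the order below $-n$ and does not contribute. The scalar multiplier $fN^{-1}$ commutes with everything at the symbol level, as noted after \eqref{eq:composition}. The zero-order coefficients of $\dot D$ in \eqref{eq:full-D-variation}, namely the $\partial_t\Gamma$ term and $\dot\Psi$, also disappear for this reason. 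So does $\Psi$ in $D$, and so do the lower-order terms of $|D|^{-n-4}$.

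The next step is to fix $t$ and $x_0$ and work in $g_t$-normal coordinates at $x_0$. I would use $\sigma_{-n-4}(|D|^{-n-4})=|\xi|_g^{-n-4}\Id_E$, which follows from $\sigma_2(D^2)=|\xi|_g^2\Id_E$, together with the principal symbols \eqref{eq:velocity-symbol}. This gives
\[
 \sigma_{-n}\bigl(fN^{-1}|D|^{-n-4}(D\dot D)^2\bigr)
 =fN^{-1}|\xi|^{-n-4}\sigma_1(D)\sigma_1(\dot D)\sigma_1(D)\sigma_1(\dot D).
\]
Taking the trace and applying \eqref{eq:velocity-four}, this becomes $fN^{-1}2^{n-1}[\dot g(\xi^\sharp,\xi^\sharp)]^2$ on $|\xi|=1$. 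At the center $g_{ij}=\delta_{ij}$, so $\dot g(\xi^\sharp,\xi^\sharp)=\dot g_{ij}\xi_i\xi_j$ and $\dd S_g$ is the Euclidean measure. Lemma~\ref{lem:spherical-moments}, applied with $H=\dot g$ through \eqref{eq:sphere-h} and the remark after it, gives the spherical integral $\frac{\nu_{n-1}}{n(n+2)}((\trg\dot g)^2+2|\dot g|_g^2)$. Both sides are coordinate invariant scalars, so integrating over $M$ yields \eqref{eq:velocity-result}.

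For the second residue, the principal symbol contains the factor $\sigma_1(\dot D)^2$, and this is zero by Lemma~\ref{lem:velocity-traces}. Hence $\sigma_{-n}=0$ identically and the residue vanishes.

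The main point requiring care is the homogeneity bookkeeping. I need to justify that no subprincipal information enters: the first-order coefficient of $\dot D$ being nonconstant in $x$, $\partial_t\Gamma(x_0)\neq0$, and the non-self-adjointness of $\dot D$ all play no role. The argument is simply that every correction term in the composition expansion has homogeneity at most $-n-1$. The residue only sees the homogeneity $-n$ component, and the order-$(-n)$ operator contributes exactly the product of principal symbols. I expect no genuine obstacle beyond stating this cleanly.
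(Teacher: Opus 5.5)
Your proposal is correct and follows essentially the same approach as the paper. You identify the degree $-n$ symbol as the product of principal symbols and evaluate it with Lemma~\ref{lem:velocity-traces} and \eqref{eq:sphere-h}. For the second residue, the paper phrases the vanishing as an order drop ($\dot D^2$ has order at most one, so the product has order at most $-n-1$), which is equivalent to your observation that the top-homogeneity product contains the factor $\sigma_1(\dot D)^2=0$.
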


\begin{proof}
The composition rule \eqref{eq:composition} and the order identity
$(-n-4)+1+1+1+1=-n$ give
\begin{align*}
 &\sigma_{-n}\bigl(fN^{-1}|D|^{-n-4}(D\dot D)^2\bigr)
 =fN^{-1}|\xi|_g^{-n-4}
       \sigma_1(D)\sigma_1(\dot D)\sigma_1(D)\sigma_1(\dot D).
\end{align*}
Indeed, every loss of symbol order or $\xi$-derivative makes the
degree strictly smaller than $-n$. Hence
Lemma~\ref{lem:velocity-traces} and
Lemma~\ref{lem:spherical-moments} yield
\begin{align*}
 \int_{|\xi|_g=1}
   \trE\sigma_{-n}\bigl(fN^{-1}|D|^{-n-4}(D\dot D)^2\bigr)                                   S_g
 &
 =\frac{2^{n-1}f}{N}
   \int_{|\xi|_g=1}
       \bigl[\dot g(\xi^{\sharp_g},\xi^{\sharp_g})\bigr]^2\dd S_g\\
 &
 =\frac{2^{n-1}\nu_{n-1}f}{n(n+2)N}
           \bigl((\trg\dot g)^2+2|\dot g|_g^2\bigr).
\end{align*}
Integration over $M$ proves the first identity.

For the second composition,
\[
 \sigma_2(\dot D^2)=\sigma_1(\dot D)^2=0,
 \qquad \operatorname{ord}(\dot D^2)\leq1.
\]
Consequently,
\[
 \operatorname{ord}
   \bigl(fN^{-1}|D|^{-n-4}D^2\dot D^2\bigr)
 \leq -n-4+2+1=-n-1,
\]
so its degree $-n$ symbol and residue both vanish.
\end{proof}

The first line of \eqref{eq:functional} is therefore
\begin{equation}
\label{eq:kinetic-result}
\begin{aligned}
 &\Wres\!\left[fN^{-1}|D|^{-n-4}
       \left(\frac{n(n+2)}8(D\dot D)^2
                   +\frac{n(n-4)}8D^2\dot D^2\right)\right]\\
 &
 =\frac{n(n+2)}8\,
   \frac{2^{n-1}\nu_{n-1}}{n(n+2)}
      \int_M\frac fN
            \bigl((\trg\dot g)^2+2|\dot g|_g^2\bigr)\dd\vol_g\\
 &
 =2^n\nu_{n-1}\int_M
       f\,\frac{(\trg\dot g)^2+2|\dot g|_g^2}{16N}\dd\vol_g.
\end{aligned}
\end{equation}
This identity holds for every smooth real $f$. When $f\geq0$,
its right-hand side is nonnegative.

\subsection{The weighted Hodge curvature term}

We express the traced Bochner potential in terms of $v_\Psi$
from \eqref{eq:vPsi}, using the standard decomposition for
Dirac-type operators \cite{BGV1992,Gilkey1995}.

\begin{lemma}[Bochner potential of a perturbed Hodge operator]
\label{lem:potential}
The Bochner decomposition of $D^2$ has a potential $V_\Psi$ satisfying
\begin{equation}
\label{eq:potential-trace}
 \trE V_\Psi=2^n\left(\frac14R_g+v_\Psi\right).
\end{equation}
\end{lemma}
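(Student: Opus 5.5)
The plan is to expand $D^2=(D^0_g+\Psi)^2$, absorb the first-order cross term into a modified connection, and read off the potential. The trace then follows from the Weitzenb\"ock formula \eqref{eq:Weitzenbock} and cyclicity. All computations use the coordinate expression $D^0_g=c_g(dx^j)\nabla^{E,g}_{\partial_j}$ from Section~\ref{sec:preliminaries}. We write $c^j=c_g(dx^j)$ and $\nabla=\nabla^{E,g}$.

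\textbf{Step 1: expand the square.} We have
\[
 D^2=(D_g^0)^2+D_g^0\Psi+\Psi D_g^0+\Psi^2 .
\]
Using $D_g^0\Psi s=c^j(\nabla_j\Psi)s+c^j\Psi\nabla_js$, the cross terms become
\[
 D_g^0\Psi+\Psi D_g^0
 =B^j\nabla_j+c^j(\nabla_j\Psi),
 \qquad B^j=\antic{c^j}{\Psi}.
\]
Here $\nabla_j\Psi$ is the induced covariant derivative of the endomorphism. Substituting \eqref{eq:Weitzenbock} gives
\[
 D^2=\nabla^*\nabla+B^j\nabla_j+\mathcal R_g+\Psi^2+c^j(\nabla_j\Psi).
\]

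\textbf{Step 2: complete the square.} Set $\nabla'_j=\nabla_j+\theta_j$ with $\theta_j=-\tfrac12 g_{jk}B^k$. Writing $\theta^j=g^{jk}\theta_k$, a direct expansion of $\nabla'^*\nabla'=-g^{ij}(\nabla'_i\nabla'_j-\Gamma^k_{ij}\nabla'_k)$ gives
\[
 \nabla'^*\nabla'=\nabla^*\nabla-2\theta^j\nabla_j-(\nabla_j\theta^j)-\theta^j\theta_j .
\]
The choice of $\theta$ makes the first-order term $-2\theta^j\nabla_j$ equal to $B^j\nabla_j$. The potential is therefore
\[
 V_\Psi=\mathcal R_g+\Psi^2+c^j(\nabla_j\Psi)-\tfrac12\nabla_j B^j+\tfrac14 g_{ij}B^iB^j .
\]
By uniqueness of the Bochner decomposition for Laplace-type operators \cite{Gilkey1995}, this $V_\Psi$ is the potential in Lemma~\ref{lem:heat-residue}.

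\textbf{Step 3: take the trace.} Clifford multiplication by $dx^j$ is parallel for the Levi--Civita connection on $E$, so $\nabla_jB^j=\antic{c^j}{\nabla_j\Psi}$. The derivative terms in $V_\Psi$ then collapse to
\[
 \tfrac12\bigl(c^j(\nabla_j\Psi)-(\nabla_j\Psi)c^j\bigr),
\]
a commutator, which is traceless. By \eqref{eq:Weitzenbock}, $\trE\mathcal R_g=2^{n-2}R_g=2^n\cdot\tfrac14R_g$. The remaining terms $\trE\bigl(\Psi^2+\tfrac14g_{ij}B^iB^j\bigr)$ equal $2^nv_\Psi$ by \eqref{eq:vPsi}, which gives \eqref{eq:potential-trace}.

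\textbf{Main obstacle.} The main risk is sign bookkeeping in Step 2. The signs of the divergence term and of $\theta^j\theta_j$ must come out so that the quadratic term is $+\tfrac14B^jB_j$, matching $v_\Psi$, and so that the derivative terms pair into a commutator. I would check this in normal coordinates at a point, where $\Gamma=0$ but the derivatives of $B$ survive. As an independent check I would compare with Gilkey's formula $E=B-g^{ij}(\partial_i\omega_j+\omega_i\omega_j-\omega_k\Gamma^k_{ij})$.
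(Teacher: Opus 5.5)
Your proposal is correct and follows essentially the same route as the paper. Both arguments expand $(D_g^0+\Psi)^2$, absorb $B^j\nabla_j$ into the shifted connection $\nabla_j-\tfrac12 B_j$, and arrive at the same $V_\Psi$. Both then remove the derivative terms under the trace using parallelism of Clifford multiplication; your remark that these terms combine pointwise into the commutator $\tfrac12[c^j,\nabla_j\Psi]$ is just a local form of the paper's identity $\trE(\nabla_jB^j)=2\trE\bigl(c^j(\nabla_j\Psi)\bigr)$.
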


\begin{proof}
Write $\nabla_i=\nabla^{E,g}_{\partial_i}$, with the induced
connection also used on endomorphisms. Applying the product rule
to a section of $E$ gives
\begin{align*}
 D^2&=(D_g^0)^2+\antic{D_g^0}{\Psi}+\Psi^2,\\
 \antic{D_g^0}{\Psi}
 &=\antic{c_g(dx^j)}{\Psi}\nabla_j+c_g(dx^j)(\nabla_j\Psi).
\end{align*}
For this proof set $B^j=\antic{c_g(dx^j)}{\Psi}$,
$B_i=g_{ij}B^j$, and $\nabla'_i=\nabla_i-B_i/2$.
The Bochner Laplacian is
$\nabla^*\nabla=-g^{ij}(\nabla_i\nabla_j-
\Gamma(g)^k{}_{ij}\nabla_k)$. Expanding the shifted connection
gives
\[
 (\nabla')^*\nabla'
 =\nabla^*\nabla+B^j\nabla_j
   +\frac12\nabla_jB^j-\frac14g_{ij}B^iB^j.
\]
Here $\nabla_jB^j$ includes the covariant derivative of the
contravariant index as well as that of the endomorphism.
Comparing with \eqref{eq:Weitzenbock} gives
\begin{equation}
\label{eq:potential-full}
 V_\Psi=\mathcal R_g+c_g(dx^j)(\nabla_j\Psi)+\Psi^2
             -\frac12\nabla_jB^j+\frac14g_{ij}B^iB^j.
\end{equation}
The Clifford action is parallel for the Levi--Civita connection.
Thus the product rule and cyclicity imply
\[
 \trE(\nabla_jB^j)
 =\trE\antic{c_g(dx^j)}{\nabla_j\Psi}
 =2\trE\bigl(c_g(dx^j)(\nabla_j\Psi)\bigr).
\]
Thus the two differentiated terms in \eqref{eq:potential-full}
cancel under the fiber trace at each point. Using
$\trE\mathcal R_g=2^{n-2}R_g$ now proves
\eqref{eq:potential-trace}. No integration by parts is involved
in this cancellation.
\end{proof}

The resulting weighted residue formula is the Hodge specialization
of \cite[Proposition~3.8]{BDSZ2026}. We state it in terms of
$v_\Psi$ and derive it from the preceding heat coefficient.

\begin{proposition}[Weighted perturbed Hodge KKW formula]
\label{prop:weighted-KKW}
For a smooth scalar function $f$,
\begin{equation}
\label{eq:weighted-KKW}
 \Wres(f|D|^{2-n})
 =2^n\nu_{n-1}\int_M f
 \left(-\frac{n-2}{24}R_g-\frac{n-2}{2}v_\Psi\right)
 \dd\vol_g.
\end{equation}
\end{proposition}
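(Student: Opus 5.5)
Since $n=2m$, we have $|D|^{2-n}=(D^2)^{1-m}=(D^2)^{-(n-2)/2}$, so the plan is to apply Lemma~\ref{lem:heat-residue} with $A=D^2$. We first check the hypotheses. $D^2$ is nonnegative and self-adjoint at fixed $t$. Its principal symbol is $\sigma_2(D^2)=\sigma_2((D_g^0)^2)=|\xi|_g^2\Id_E$, since $\Psi$ only changes lower-order terms. If $D$ has a kernel, $|D|^{2-n}$ is defined by spectral calculus with the kernel projected out. It therefore differs from a parametrix of $(D^2)^{(2-n)/2}$ by a smoothing operator, which does not change the residue; this is already remarked in the proof of Lemma~\ref{lem:heat-residue}.

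Next we identify the Bochner decomposition. By Lemma~\ref{lem:potential}, $D^2=(\nabla')^*\nabla'+V_\Psi$ with $\nabla'=\nabla^{E,g}-\tfrac12B$. The connection $\nabla'$ need not be metric, and Lemma~\ref{lem:heat-residue} only uses the standard second heat coefficient of $\nabla^*\nabla+V$. That coefficient is $\tfrac16R_g-V$ for any connection whose connection Laplacian has scalar principal symbol, so this causes no problem. Lemma~\ref{lem:heat-residue} then gives
\[
\Wres(f|D|^{2-n})=\frac{n-2}{2}\nu_{n-1}\int_M f\Bigl(\frac16R_g\,2^n-\trE V_\Psi\Bigr)\dd\vol_g .
\]

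We insert $\trE V_\Psi=2^n(\tfrac14R_g+v_\Psi)$ from \eqref{eq:potential-trace}. The scalar curvature then has coefficient
\[
\frac{n-2}{2}\Bigl(\frac16-\frac14\Bigr)2^n=-\frac{n-2}{24}\,2^n,
\]
and $v_\Psi$ has coefficient $-\frac{n-2}{2}2^n$. This gives \eqref{eq:weighted-KKW}.

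The only delicate point is whether Lemma~\ref{lem:heat-residue} legitimately applies when the Bochner connection $\nabla'$ is not compatible with the Hermitian metric. To settle it, we rely on the fact that the Gilkey coefficient $a_2=\tfrac16R-V$ holds for any Laplace-type operator written in its unique Bochner form. We also note that the weight $f$ enters only as a scalar multiplier of the diagonal heat coefficient, as in the proof of that lemma. Everything else is arithmetic.
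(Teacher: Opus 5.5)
Your proposal is correct and follows the paper's proof: you apply Lemma~\ref{lem:heat-residue} to $A=D^2$, insert $\trE V_\Psi=2^n(\frac14R_g+v_\Psi)$ from Lemma~\ref{lem:potential}, and use $\frac{n-2}{2}(\frac16-\frac14)=-\frac{n-2}{24}$. The ``delicate point'' you raise does not arise, because $c_g(dx^j)$ is skew-adjoint and $\Psi$ is self-adjoint, so $B_i=g_{ij}\antic{c_g(dx^j)}{\Psi}$ is skew-adjoint and $\nabla'=\nabla^{E,g}-\frac12B$ is in fact metric; as you note, the Gilkey coefficient would not require this anyway.
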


\begin{proof}
Apply Lemma~\ref{lem:heat-residue} to $A=D^2$.
Lemma~\ref{lem:potential} gives
\[
 \trE\left(\frac16R_g\Id_E-V_\Psi\right)
 =2^n\left(-\frac1{12}R_g-v_\Psi\right).
\]
Therefore Lemma~\ref{lem:heat-residue} gives
\begin{align*}
 \Wres(f|D|^{2-n})
 &=\frac{n-2}{2}\nu_{n-1}
   \int_M f\,2^n\left(-\frac1{12}R_g-v_\Psi\right)\dd\vol_g\\
 &=2^n\nu_{n-1}\int_M
       f\left(-\frac{n-2}{24}R_g-\frac{n-2}{2}v_\Psi\right)
          \dd\vol_g.
\end{align*}
\end{proof}

The contracted Clifford relation gives
$g_{ij}c_g(dx^i)c_g(dx^j)=-n\Id_E$.
By cyclicity, the two expressions for the perturbation term agree:
\begin{align*}
 g_{ij}\trE\bigl(\antic{c_g(dx^i)}{\Psi}
                         \antic{c_g(dx^j)}{\Psi}\bigr)
 &=2g_{ij}\trE\bigl(c_g(dx^i)\Psi c_g(dx^j)\Psi\bigr)
       -2n\trE\Psi^2,\\
 2^n v_\Psi
 &=\left(1-\tfrac n2\right)\trE\Psi^2
     +\tfrac12g_{ij}\trE\bigl(c_g(dx^i)\Psi c_g(dx^j)\Psi\bigr).
\end{align*}
For $\Psi=0$ and, respectively, for the weight $fN$, this yields
\begin{align*}
 \Wres(f|D_g^0|^{2-n})
 &=-\frac{(n-2)2^n}{24}\nu_{n-1}\int_M fR_g\dd\vol_g,\\
 \Wres(fN|D|^{2-n})
 &=2^n\nu_{n-1}\int_M
       fN\left(-\frac{n-2}{24}R_g-\frac{n-2}{2}v_\Psi\right)
          \dd\vol_g.
\end{align*}

\subsection{The lapse correction}

Write $A=D^2$ and $W=|D|^{-n-2}$. We use the convention
$[A,f]=Af-fA$ for a scalar multiplication operator $f$.
Since $A$ has scalar principal symbol, $[A,f]$ has order at
most one. Commuting $N^{-1}$ past the second copy of $A$ gives
\begin{align}
 N A N^{-1}A N W
 &=N A\bigl(A N^{-1}-[A,N^{-1}]\bigr)N W\notag\\
 &=N|D|^{2-n}-P W,
 \label{eq:lapse-split}
\end{align}
where
\[
 P=N A[A,N^{-1}]N.
\]
The equalities involving inverse powers are understood modulo
smoothing terms. The operator $P$ has order at most three.
This reduction isolates the weighted KKW term before any
lower-order symbols are calculated.

\begin{proposition}[Third-order symbol calculation]
\label{prop:third-order}
At each $x\in M$, the local residue density satisfies
\begin{align*}
 \operatorname{res}_x(PW)
 &:=\left(\int_{|\xi|_g=1}
       \trE\sigma_{-n}(PW)(x,\xi)\dd S_g(\xi)\right)\dd\vol_g(x)\\
 &=2^n\nu_{n-1}
 \left(\frac{n+4}{n}\Div_g\grad_g N
              -\frac4{nN}|\dd N|_g^2\right)(x)\dd\vol_g(x).
\end{align*}
Consequently,
\begin{equation}
\label{eq:third-order-integral}
 \Wres(PW)=-\frac{2^{n+2}\nu_{n-1}}n
                  \int_M N^{-1}|\dd N|_g^2\dd\vol_g.
\end{equation}
\end{proposition}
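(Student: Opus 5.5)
The plan is to compute $\sigma_{-n}(PW)$ at the center $x_0$ of $g$-normal coordinates in the coordinate exterior frame. I would then integrate over $|\xi|=1$ with Lemma~\ref{lem:spherical-moments} and take $\trE$. Finally, \eqref{eq:third-order-integral} follows from the pointwise density, since $\int_M\Div_g\grad_gN\dd\vol_g=0$ on a closed manifold.

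First I would write $A=D^2=-g^{ij}\partial_i\partial_j+a^j\partial_j+b$. At $x_0$ one has $\Gamma(x_0)=0$, so the first-order coefficient there is $a^j(x_0)=B^j=\antic{c_g(dx^j)}{\Psi}$, which need not vanish. For a scalar $h$, the composition rule \eqref{eq:composition} gives
\[
\sigma_1([A,h])=-2\ii g^{ij}(\partial_ih)\xi_j,\qquad
\sigma_0([A,h])=-g^{ij}\partial_i\partial_jh+a^j\partial_jh
\]
at $x_0$. With $h=N^{-1}$ one has $\partial_ih=-N^{-2}\partial_iN$ and $\partial_i\partial_jh=-N^{-2}\partial_i\partial_jN+2N^{-3}\partial_iN\partial_jN$; this substitution is the source of the $|\dd N|_g^2/N$ term.

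Next I would expand the symbol of $P=NA[A,N^{-1}]N$ to orders $3$ and $2$. The order-$3$ part is
\[
\sigma_3(P)=N^2|\xi|^2\sigma_1([A,N^{-1}]),
\]
which is scalar and odd in $\xi$. The order-$2$ part $\sigma_2(P)$ has several contributions. The first is $N^2|\xi|^2\sigma_0([A,N^{-1}])$. The second is $N^2\sigma_1(A)\sigma_1([A,N^{-1}])$, where $\sigma_1(A)=\ii a^j\xi_j$ at $x_0$. The third consists of the $\partial_\xi\sigma_2(A)\,D_x$ cross terms. Two of these are hit by $D_x$: the $x$-dependence of $\sigma_1([A,N^{-1}])$, which produces second derivatives of $N$, and the right factor $N$, which produces $\partial N$. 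The rest come from the left factor $N$ and the conjugation. For $W$, the principal symbol is $|\xi|_g^{-n-2}$, and
\[
\sigma_{-n-3}(W)=-(m+1)|\xi|^{-n-4}\sigma_1(A)+(\text{terms with }\partial g,\ \text{vanishing at }x_0)
\]
by the standard parametrix recursion for $(D^2)^{-m-1}$. The degree $-n$ symbol of $PW$ at $x_0$ is then
\[
\sigma_3(P)\sigma_{-n-3}(W)+\sigma_2(P)\sigma_{-n-2}(W)+\sum_k\partial_{\xi_k}\sigma_3(P)\,D_{x_k}\sigma_{-n-2}(W).
\]
The last sum vanishes at $x_0$ because $\partial g(x_0)=0$.

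The step I expect to be the main obstacle is the bookkeeping of the perturbation: the terms linear in $B^j$ in $\sigma_3(P)\sigma_{-n-3}(W)$ and in $\sigma_2(P)$. I would first check that they are even in $\xi$, so they survive the parity argument. Then, using $\trE B^j=2\trE(c_g(dx^j)\Psi)$ and the second-moment identity, I would verify that their traced spherical integrals cancel exactly. The cancellation should come from the weights $-(m+1)$ versus $1$ together with the $|\xi|^2$ factor and $n=2m$. If the bare counting does not close, I would keep $a^j$ general and confirm that its total coefficient is zero, as the paper asserts.

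What remains are the pure-lapse terms. These are $\xi_i\xi_j\partial_i\partial_jN$ and $|\xi|^2\Delta N$ terms, together with the $|\dd N|^2/N$ terms from the substitution for $h=N^{-1}$ and from the commutations with the outer factors $N$. After spherical averaging with Lemma~\ref{lem:spherical-moments}, which replaces $\xi_i\xi_j$ by $\delta_{ij}/n$, and after multiplying by $\trE\Id=2^n$, these should assemble to
\[
2^n\nu_{n-1}\Bigl(\frac{n+4}{n}\Div_g\grad_gN-\frac4{nN}|\dd N|_g^2\Bigr),
\]
which is the stated density.
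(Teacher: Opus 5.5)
Your proposal is correct and follows the paper's proof essentially step for step. The shared ingredients are:
\begin{itemize}
\item $g$-normal coordinates in the coordinate exterior frame, with the surviving first-order coefficient $\antic{c_g(dx^j)}{\Psi}(x_0)$;
\item the three-term expansion of $\sigma_{-n}(PW)$, in which the $D_x\sigma_{-n-2}(W)$ term vanishes at $x_0$;
\item cancellation of the perturbation terms under angular integration;
\item Lemma~\ref{lem:spherical-moments} for the remaining scalar lapse terms.
\end{itemize}

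Your hedged step closes as follows. The $B^j$-terms carry weights $-1$ (from $|\xi|_g^2\sigma_0([A,N^{-1}])$), $-2$ (from $\sigma_1(A)\sigma_1([A,N^{-1}])$) and $+2(m+1)$ (from $\sigma_3(P)\sigma_{-n-3}(W)$). Together they give $-B^jN_j|\xi|_g^{-n}+n(B^i\xi_i)(N_j\xi_j)|\xi|_g^{-n-2}$, which integrates to zero over the sphere even before taking $\trE$. Also, the left factor $N$ generates no derivative terms in the left-symbol calculus; only the right factor $N$ does.
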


\begin{proof}
Fix a time and choose $g$-normal coordinates centered at $x_0$.
We retain the coordinate exterior frame and write
\[
 A=-g^{ij}\partial_i\partial_j+b^j\partial_j+v,
\]
where $b^j$ and $v$ are endomorphism-valued coefficients. Normal
coordinates give $g^{ij}(x_0)=\delta^{ij}$ and
$\partial_k g^{ij}(x_0)=0$, but do not in general give $b^j(x_0)=0$.
Indeed, expansion of $(D_g^0+\Psi)^2$ using
\eqref{eq:Weitzenbock} gives
\[
 b^j=-2g^{ij}\omega_i
      +g^{ik}\Gamma(g)^j{}_{ik}\Id_E
      +\antic{c_g(dx^j)}{\Psi}.
\]
The Levi--Civita coefficients $\omega_i$ and $\Gamma(g)^j{}_{ik}$
vanish at $x_0$, so
\[
 b^j(x_0)=\antic{c_g(dx^j)}{\Psi}(x_0),
 \qquad
 \sigma_1(A)(x_0,\xi)=\ii b^j(x_0)\xi_j.
\]
Put $\varphi=N^{-1}$. For a local section $\alpha$ of $E$,
the product rule gives
\begin{align*}
 [A,\varphi]\alpha
 &=A(\varphi\alpha)-\varphi A\alpha\\
 &=-2g^{ij}(\partial_i\varphi)\partial_j\alpha
   -g^{ij}(\partial_i\partial_j\varphi)\alpha
   +b^j(\partial_j\varphi)\alpha.
\end{align*}
Here $[v,\varphi]=0$, since $\varphi$ is scalar. Write
$[A,\varphi]N=u^j\partial_j\Id_E+v_0$. Applying this
operator to $\alpha$ and using
$\partial_j(N\alpha)=(\partial_jN)\alpha+N\partial_j\alpha$
gives
\begin{align*}
 [A,\varphi](N\alpha)
 ={}&-2Ng^{ij}(\partial_i\varphi)\partial_j\alpha+\left\{
   \bigl[-2g^{ij}(\partial_i\varphi)(\partial_jN)
          -Ng^{ij}\partial_i\partial_j\varphi\bigr]\Id_E
       +Nb^j\partial_j\varphi
   \right\}\alpha.
\end{align*}
Thus
\begin{align*}
 u^j&=-2Ng^{ij}\partial_i\varphi
       =2g^{ij}\frac{\partial_iN}{N},\\
 v_0&=\bigl[-2g^{ij}(\partial_i\varphi)(\partial_jN)
             -Ng^{ij}\partial_i\partial_j\varphi\bigr]\Id_E
       +Nb^j\partial_j\varphi.
\end{align*}

Set $N_i=\partial_iN(x_0)$ and
$N_{ij}=\partial_i\partial_jN(x_0)$. In the following
pointwise formulas, $N=N(x_0)$ and $b^j=b^j(x_0)$.
Differentiating $\varphi=N^{-1}$ gives
\[
 \partial_i\varphi(x_0)=-\frac{N_i}{N^2},
 \qquad
 \partial_i\partial_j\varphi(x_0)
 =-\frac{N_{ij}}{N^2}+\frac{2N_iN_j}{N^3}.
\]
In particular,
\[
 \left.Nb^j\partial_j\varphi\right|_{x_0}
 =Nb^j\left(-\frac{N_j}{N^2}\right)
 =-\frac{b^jN_j}{N}.
\]
The scalar gradient-square terms in $v_0$ cancel:
\begin{align*}
 v_0(x_0)
 &=\left[
       \frac{2}{N^2}\sum_iN_i^2
       +\frac1N\sum_iN_{ii}
       -\frac{2}{N^2}\sum_iN_i^2
     \right]\Id_E-\frac{b^jN_j}{N}\\
 &=\frac{\sum_iN_{ii}}{N}\Id_E-\frac{b^jN_j}{N}.
\end{align*}
To differentiate $u^j$, first use its expression near $x_0$:
\[
 \partial_i u^j
 =2(\partial_i g^{\ell j})\frac{\partial_\ell N}{N}
   +2g^{\ell j}\left(
      \frac{\partial_i\partial_\ell N}{N}
      -\frac{(\partial_iN)(\partial_\ell N)}{N^2}\right).
\]
Normal coordinates then give
\[
 u^j(x_0)=\frac{2N_j}{N},
 \qquad
 \partial_i u^j(x_0)
 =\frac{2N_{ij}}N-\frac{2N_iN_j}{N^2}.
\]

Write $\operatorname{Diff}^1(E)$ for differential operators on $E$
of order at most one. In the product
\[
 P=N(-g^{ik}\partial_i\partial_k+b^i\partial_i+v)
        (u^j\partial_j\Id_E+v_0),
\]
the required applications of the product rule are
\begin{align*}
 \partial_i\partial_k(u^j\partial_j\alpha)
 ={}&u^j\partial_i\partial_k\partial_j\alpha
       +(\partial_i u^j)\partial_k\partial_j\alpha+(\partial_k u^j)\partial_i\partial_j\alpha
       +(\partial_i\partial_k u^j)\partial_j\alpha,\\
 \partial_i\partial_k(v_0\alpha)
 ={}&v_0\partial_i\partial_k\alpha
       +(\partial_i v_0)\partial_k\alpha
       +(\partial_k v_0)\partial_i\alpha
       +(\partial_i\partial_k v_0)\alpha,\\
 b^i\partial_i(u^j\partial_j\alpha)
 ={}&b^iu^j\partial_i\partial_j\alpha
       +b^i(\partial_i u^j)\partial_j\alpha.
\end{align*}
Using $g^{ik}=g^{ki}$ and retaining orders three and two yields
\begin{align*}
 P\equiv{}&
   -Ng^{ik}u^j\partial_i\partial_k\partial_j\Id_E
   -2Ng^{ik}(\partial_i u^j)\partial_k\partial_j\Id_E\\
 &-Ng^{ik}v_0\partial_i\partial_k
   +Nb^iu^j\partial_i\partial_j
                 \pmod{\operatorname{Diff}^1(E)}.
\end{align*}
We now substitute $u^j$, $\partial_i u^j$, and $v_0$.
Writing $P|_{x_0}$ for coefficientwise evaluation in these
coordinates, with the differential operators left in place, gives
\begin{align*}
 \left.P\right|_{x_0}\equiv{}&
 -N\delta^{ik}\frac{2N_j}{N}
             \partial_i\partial_k\partial_j\Id_E\\
 &-2N\delta^{ik}\left(\frac{2N_{ij}}N
                 -\frac{2N_iN_j}{N^2}\right)
             \partial_k\partial_j\Id_E\\
 &-N\delta^{ik}\left[
       \frac{\sum_\ell N_{\ell\ell}}N\Id_E
           -\frac{b^jN_j}N\right]\partial_i\partial_k
   +Nb^i\frac{2N_j}N\partial_i\partial_j\\
 ={}&
 -2N_j\partial_i\partial_i\partial_j\Id_E
 -4\left(N_{ij}-\frac{N_iN_j}N\right)
                \partial_i\partial_j\Id_E\\
 &-\left(\sum_\ell N_{\ell\ell}\right)
                \partial_i\partial_i\Id_E
 +(b^jN_j)\partial_i\partial_i
 +2b^iN_j\partial_i\partial_j
                  \pmod{\operatorname{Diff}^1(E)}.
\end{align*}
The last two terms are the contributions of the first-order
coefficient of $A$. In particular, the negative $b^jN_j/N$
term in $v_0$ becomes positive after multiplication by $-N$.

Set $p_k=\sigma_k(P)(x_0,\xi)$. With
$\sigma(\partial_j)=\ii\xi_j$, the third-order term gives
$-2N_j(\ii\xi_i)^2(\ii\xi_j)
=2\ii|\xi|_g^2N_j\xi_j$, whereas each second-order term
acquires the factor $\ii^2=-1$. Hence
\begin{align*}
 p_3&=2\ii|\xi|_g^2N_j\xi_j\Id_E,\\
 p_2&=\left[
      4N_{ij}\xi_i\xi_j-\frac4N(N_i\xi_i)^2
           +\left(\sum_iN_{ii}\right)|\xi|_g^2
      \right]\Id_E-(b^jN_j)|\xi|_g^2
            -2(b^i\xi_i)(N_j\xi_j).
\end{align*}

Next let $r_{-2}$ and $r_{-3}$ be the first two symbols of a
parametrix of $A$. The terms of degrees zero and minus one
in $\sigma(A)\#\sigma(A^{-1})=\Id_E$ are
\begin{align*}
 |\xi|_g^2r_{-2}&=\Id_E,\\
 |\xi|_g^2r_{-3}+\sigma_1(A)r_{-2}
   +\sum_j\partial_{\xi_j}(|\xi|_g^2)D_{x_j}r_{-2}&=0.
\end{align*}
Consequently,
\begin{align*}
 r_{-2}&=|\xi|_g^{-2}\Id_E,\\
 r_{-3}
 &=-|\xi|_g^{-2}
   \left[\sigma_1(A)|\xi|_g^{-2}
      +\sum_j\partial_{\xi_j}(|\xi|_g^2)
                    D_{x_j}(|\xi|_g^{-2})\Id_E\right].
\end{align*}
At the normal-coordinate center,
\[
 D_{x_j}(|\xi|_g^{-2})(x_0,\xi)
 =-\frac1{\ii}|\xi|_g^{-4}
       (\partial_{x_j}g^{k\ell})(x_0)\xi_k\xi_\ell=0.
\]
Thus the derivative term vanishes, but the first-order symbol
of $A$ remains:
\[
 r_{-3}(x_0,\xi)
 =-|\xi|_g^{-2}(\ii b^j\xi_j)|\xi|_g^{-2}
 =-\ii b^j\xi_j|\xi|_g^{-4}.
\]

Since $W=A^{-(m+1)}$ modulo smoothing terms, its principal
symbol is the product of $m+1$ copies of $r_{-2}$.
A contribution one order lower contains either one $r_{-3}$
or one differentiated pair of principal symbols. At $x_0$
every contribution of the second kind contains
$\partial_x r_{-2}(x_0,\xi)=0$. The first kind gives
\[
 \sum_{\ell=0}^{m}r_{-2}^{\,\ell}r_{-3}r_{-2}^{\,m-\ell}
 =-(m+1)\ii b^j\xi_j|\xi|_g^{-2m-4},
\]
because $r_{-2}$ is scalar. Thus, for $q_k=\sigma_k(W)$,
\begin{equation}
\label{eq:W-symbols}
\begin{aligned}
 q_{-n-2}(x,\xi)&=|\xi|_g^{-n-2}\Id_E,\\
 q_{-n-3}(x_0,\xi)
   &=-\frac{n+2}{2}\ii b^j\xi_j|\xi|_g^{-n-4},\\
 \partial_{x_j}q_{-n-2}(x_0,\xi)&=0.
\end{aligned}
\end{equation}
The leading symbol is specified near $x_0$, so its derivative
is taken before evaluation at the normal-coordinate center.

The order condition for the degree $-n$ composition is
$(3-a)+(-n-2-\ell)-|\alpha|=-n$, or
$a+\ell+|\alpha|=1$. Consequently,
\begin{equation}
\label{eq:three-symbol-terms}
 \sigma_{-n}(PW)
 =p_2q_{-n-2}+p_3q_{-n-3}
       +\sum_j\partial_{\xi_j}p_3D_{x_j}q_{-n-2}.
\end{equation}
At $x_0$, the differentiated term in
\eqref{eq:three-symbol-terms} vanishes because
$D_{x_j}q_{-n-2}(x_0,\xi)=0$. The other two terms are
\begin{align*}
 p_2q_{-n-2}
 ={}&|\xi|_g^{-n-2}
  \left[4N_{ij}\xi_i\xi_j-\frac4N(N_i\xi_i)^2
       +\left(\sum_iN_{ii}\right)|\xi|_g^2\right]\Id_E\\
 &-(b^jN_j)|\xi|_g^{-n}
       -2(b^i\xi_i)(N_j\xi_j)|\xi|_g^{-n-2},\\
 p_3q_{-n-3}
 ={}&\bigl(2\ii|\xi|_g^2N_j\xi_j\bigr)
       \left(-\frac{n+2}{2}\ii b^i\xi_i|\xi|_g^{-n-4}\right)\\
 ={}&(n+2)(b^i\xi_i)(N_j\xi_j)|\xi|_g^{-n-2}.
\end{align*}
Here $N_j\xi_j$ is scalar, so it commutes with $b^i\xi_i$.
The coefficient of
$(b^i\xi_i)(N_j\xi_j)|\xi|_g^{-n-2}$ in their sum is
$-2+(n+2)=n$.
Combining it with $p_2q_{-n-2}$ yields
\begin{align*}
 \sigma_{-n}(PW)(x_0,\xi)
 ={}&|\xi|_g^{-n-2}
  \left[4N_{ij}\xi_i\xi_j-\frac4N(N_i\xi_i)^2
          +\left(\sum_iN_{ii}\right)|\xi|_g^2\right]\Id_E\\
 &-(b^jN_j)|\xi|_g^{-n}
       +n(b^i\xi_i)(N_j\xi_j)|\xi|_g^{-n-2}.
\end{align*}
On the unit sphere the two $b$-dependent terms cancel after
angular integration, even before taking the fiber trace:
\begin{align*}
 \int_{S^{n-1}}\!
 \left[-b^jN_j+n(b^i\xi_i)(N_j\xi_j)\right]\dd S
 &=-\nu_{n-1}b^jN_j
   +n b^iN_j\frac{\nu_{n-1}}n\delta_{ij}\\
 &=0.
\end{align*}
Using $\trE(\Id_E)=2^n$ and
\eqref{eq:sphere-moments} for the remaining scalar terms gives
\begin{align*}
 \int_{|\xi|_g=1}\trE\sigma_{-n}(PW)\dd S_g
 &=2^n\nu_{n-1}
    \left[\frac4n\sum_iN_{ii}
       -\frac4{nN}\sum_iN_i^2+\sum_iN_{ii}\right]\\
 &=2^n\nu_{n-1}
    \left[\frac{n+4}{n}\Div_g\grad_gN
       -\frac4{nN}|\dd N|_g^2\right].
\end{align*}
At a normal-coordinate center, $N_{ij}$ are the components of
$\Hess_gN$. This proves the local formula. Since $M$ is closed,
the divergence theorem gives
$\int_M\Div_g\grad_gN\dd\vol_g=0$, proving
\eqref{eq:third-order-integral}.
\end{proof}

This calculation uses only normal coordinates for the metric.
It imposes no normal-frame condition on the Bochner connection
of $A$. The first-order coefficients contribute to both
$p_2$ and $q_{-n-3}$; their cancellation occurs in the angular
integral, not by setting either coefficient to zero.

\begin{corollary}[Scalar-weighted lapse correction]
\label{cor:weighted-lapse}
For a smooth real function $f$ on a closed slice,
\[
 \Wres(fPW)=2^n\nu_{n-1}\int_M
 \left[-\frac{n+4}{n}\langle\dd f,\dd N\rangle_g
       -\frac{4f}{nN}|\dd N|_g^2\right]\dd\vol_g.
\]
\end{corollary}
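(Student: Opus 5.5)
Since $f$ is a scalar multiplication operator, the composition rule gives $\sigma(fPW)=f\,\sigma(PW)$, as recorded after \eqref{eq:composition}. In particular $\sigma_{-n}(fPW)(x,\xi)=f(x)\,\sigma_{-n}(PW)(x,\xi)$. The local residue density of $fPW$ is therefore $f$ times the density $\operatorname{res}_x(PW)$ computed in Proposition~\ref{prop:third-order}. No new symbol calculation is needed: all derivatives of $N$ appear in $P$ itself, and the weight sits to the left of the product. So
\[
 \Wres(fPW)=2^n\nu_{n-1}\int_M f\left(\frac{n+4}{n}\Div_g\grad_gN-\frac4{nN}|\dd N|_g^2\right)\dd\vol_g .
\]

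The second step is to integrate the divergence term by parts on the closed slice. This is where the weighted case differs from \eqref{eq:third-order-integral}: the divergence no longer integrates to zero. Instead, $\Div_g(f\grad_gN)=f\Div_g\grad_gN+\langle\dd f,\dd N\rangle_g$. Since $\partial M=\emptyset$, the divergence theorem gives
\[
 \int_M f\,\Div_g\grad_gN\dd\vol_g=-\int_M\langle\dd f,\dd N\rangle_g\dd\vol_g .
\]
Substituting this with the factor $(n+4)/n$ yields the stated mixed gradient term. The Dirichlet-type term $-\frac{4f}{nN}|\dd N|_g^2$ is unchanged.

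The only point needing care is the first step: the local density from Proposition~\ref{prop:third-order} is a pointwise identity valid at every $x$. It was computed at an arbitrary normal-coordinate center and expressed invariantly through $\Div_g\grad_gN$ and $|\dd N|_g^2$. Multiplication by $f(x)$ is therefore legitimate before integration. One should also note that $f$ is placed to the left of $P$. Hence neither $P$ nor the lower-order symbol $q_{-n-3}$ of $W$ is affected, and the $b$-dependent cancellation in the angular integral carries over unchanged. I expect no genuine obstacle; the corollary is a direct consequence of the local formula and one integration by parts.
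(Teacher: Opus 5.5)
Your proof is correct and matches the paper's argument: left multiplication by the scalar $f$ multiplies the pointwise residue density of Proposition~\ref{prop:third-order} by $f$. The identity $\Div_g(f\grad_gN)=f\Div_g\grad_gN+\langle\dd f,\dd N\rangle_g$ and the divergence theorem on the closed slice then turn the $\frac{n+4}{n}\Div_g\grad_gN$ term into the mixed gradient term, and you rightly note that this needs the local density formula, which the integrated cyclic verification of Proposition~\ref{prop:cyclic-lapse} alone would not supply.
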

\begin{proof}
Left multiplication gives
$\operatorname{res}_x(fPW)=f\operatorname{res}_x(PW)$.
Apply Proposition~\ref{prop:third-order} and integrate
$\Div_g(f\grad_gN)
=f\Div_g\grad_gN+\langle\dd f,\dd N\rangle_g$.
The integral of this divergence is zero on the closed manifold.
\end{proof}
The cyclic verification that follows concerns the unweighted
case $f=1$. For variable $f$, moving a differential operator
past $f$ produces a commutator; the preceding corollary is
the weighted formula used below.

The following identity is the Laplace-type version of the
commutator rearrangement used in
\cite[Section~7, preceding (8)]{Hawkins1997}. It provides a
verification of the integrated formula without the local symbol
calculation of Proposition~\ref{prop:third-order}.

\begin{proposition}[Cyclic verification of the lapse correction]
\label{prop:cyclic-lapse}
For any nonnegative self-adjoint Laplace-type operator $A$ on $E$
with principal symbol $|\xi|_g^2\Id_E$, and any positive scalar
$N$, put $W=A^{-(n+2)/2}$. Then
\begin{align}
 &\Wres(NA N^{-1}A N W)-\Wres(NA^2W)\notag\\
 &\hspace{2em}=
  -\Wres\bigl(N^{-1}[A,N]W[A,N]\bigr)
  =\frac{2^{n+2}\nu_{n-1}}n
           \int_M N^{-1}|\dd N|_g^2\dd\vol_g.
 \label{eq:cyclic-lapse}
\end{align}
\end{proposition}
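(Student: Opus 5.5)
The plan is to prove \eqref{eq:cyclic-lapse} by pure operator algebra together with the trace property of $\Wres$. Only one principal-symbol computation at the end is needed, and none of the lower-order symbols of Proposition~\ref{prop:third-order}. Throughout, equalities involving $W$ are understood modulo smoothing operators, which $\Wres$ annihilates. We use $AW=WA$, which holds by spectral calculus for the self-adjoint $A$ (or modulo smoothing for a parametrix).

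First I rewrite the operator $NAN^{-1}AN$. The commutator identity $AN=NA+[A,N]$ gives
\[
 NAN^{-1}AN=NA^2+NAN^{-1}[A,N].
\]
Writing $NAN^{-1}=(AN-[A,N])N^{-1}=A-[A,N]N^{-1}$ then gives
\[
 NAN^{-1}AN=NA^2+A[A,N]-[A,N]N^{-1}[A,N].
\]
Multiplying on the right by $W$ and applying $\Wres$, the difference on the left of \eqref{eq:cyclic-lapse} equals
\[
 \Wres(A[A,N]W)-\Wres([A,N]N^{-1}[A,N]W).
\]
Each product here is a classical pseudodifferential operator of order at least $-n$, so the residue trace property applies.

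Next I show $\Wres(A[A,N]W)=0$. By the trace property and $WA=AW$, this residue equals $\Wres([A,N]AW)$. Expanding the commutator gives $\Wres(ANAW)-\Wres(NA^2W)$. Cyclicity gives $\Wres(ANAW)=\Wres(NAWA)$, and $WA=AW$ turns this into $\Wres(NA^2W)$, so the two terms cancel. For the remaining term, cyclicity moves $N^{-1}$ to the front:
\[
 \Wres([A,N]N^{-1}[A,N]W)=\Wres(N^{-1}[A,N]W[A,N]).
\]
This establishes the first equality in \eqref{eq:cyclic-lapse}.

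Finally I evaluate this residue. The operator $[A,N]$ has order one, and by the product-rule computation of Proposition~\ref{prop:third-order} (with $\varphi$ replaced by $N$) its principal symbol is
\[
 \sigma_1([A,N])=-2\ii\,g^{ij}(\partial_iN)\xi_j\,\Id_E.
\]
The total order of $N^{-1}[A,N]W[A,N]$ is $1+(-n-2)+1=-n$. Its degree $-n$ symbol is therefore just the product of principal symbols,
\[
 N^{-1}\bigl(-2\ii\langle\dd N,\xi\rangle\bigr)^2|\xi|_g^{-n-2}=-4N^{-1}\langle\dd N,\xi\rangle^2|\xi|_g^{-n-2}.
\]
Taking $\trE$ (a factor $2^n$) and applying Lemma~\ref{lem:spherical-moments} at a normal-coordinate center gives the density $-2^{n+2}\nu_{n-1}n^{-1}N^{-1}|\dd N|_g^2$. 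Integrating over $M$ and changing sign yields the right side of \eqref{eq:cyclic-lapse}.

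The main obstacle is justifying the cyclic manipulations rigorously. Cyclicity of $\Wres$ applies to products of classical pseudodifferential operators of integer order, so it holds here. The commutation $AW=WA$ is exact by spectral calculus, up to the smoothing kernel projection. One must also check the sign of $\sigma_1([A,N])$ carefully, since an error there would flip the final sign.
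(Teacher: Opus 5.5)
Your proposal is correct and follows essentially the paper's argument: a commutator rearrangement using the trace property of $\Wres$ and $AW=WA$, followed by the same principal-symbol computation of $N^{-1}[A,N]W[A,N]$. The difference is only bookkeeping. You derive the exact operator identity $NAN^{-1}AN=NA^2+A[A,N]-[A,N]N^{-1}[A,N]$ and then show $\Wres(A[A,N]W)=0$. The paper instead expands $\Wres(N^{-1}CWC)$ with $C=[A,N]$ into four terms and simplifies each one cyclically.
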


\begin{proof}
Set $C=[A,N]$ in this proof. Expanding both commutators gives
\begin{align*}
 \Wres(N^{-1}CWC)
 ={}&\Wres(N^{-1}ANWAN)-\Wres(N^{-1}ANWNA)\\
    &-\Wres(AWAN)+\Wres(AWNA).
\end{align*}
Since $AW=WA$ modulo smoothing operators, cyclicity yields
\begin{align*}
 \Wres(AWAN)&=\Wres(NAWA)=\Wres(NA^2W),\\
 \Wres(AWNA)&=\Wres(NA^2W),\\
 \Wres(N^{-1}ANWAN)&=\Wres(ANWA)=\Wres(NA^2W),\\
 \Wres(N^{-1}ANWNA)&=\Wres(NA N^{-1}A N W).
\end{align*}
Substitution gives
\[
 \Wres(N^{-1}CWC)
 =\Wres(NA^2W)-\Wres(NA N^{-1}A N W).
\]

Write $N_i=\partial_iN$. The principal symbols satisfy
\[
 \sigma_1(C)=-2\ii g^{ij}N_i\xi_j\Id_E,
 \qquad \sigma_{-n-2}(W)=|\xi|_g^{-n-2}\Id_E.
\]
Since $\operatorname{ord}(N^{-1}CWC)\leq1-(n+2)+1=-n$,
\begin{align*}
 \sigma_{-n}(N^{-1}CWC)
 &=N^{-1}\sigma_1(C)\sigma_{-n-2}(W)\sigma_1(C)\\
 &=-4N^{-1}(g^{ij}N_i\xi_j)^2|\xi|_g^{-n-2}\Id_E.
\end{align*}
At a normal-coordinate center, Lemma~\ref{lem:spherical-moments}
therefore gives
\begin{align*}
 \int_{|\xi|_g=1}\trE\sigma_{-n}(N^{-1}CWC)\dd S_g
 &=-\frac{2^{n+2}}N\sum_{i,j}N_iN_j
                         \int_{S^{n-1}}\xi_i\xi_j\dd S\\
 &=-\frac{2^{n+2}\nu_{n-1}}{nN}\sum_iN_i^2
 =-\frac{2^{n+2}\nu_{n-1}}{nN}|\dd N|_g^2.
\end{align*}
Consequently,
\begin{align*}
 \Wres(NA N^{-1}A N W)-\Wres(NA^2W)
 &=-\Wres(N^{-1}CWC)\\
 &=\frac{2^{n+2}\nu_{n-1}}n
                     \int_M N^{-1}|\dd N|_g^2\dd\vol_g.
\end{align*}
For $A=D^2$, \eqref{eq:lapse-split} then recovers
\eqref{eq:third-order-integral}.
\end{proof}

The cyclic argument is an identity of integrated residues. It does
not equate the local densities of its two sides without a divergence
term. Proposition~\ref{prop:third-order} displays that divergence
explicitly. Both proofs also show that no connection or potential
term remains in the lapse correction: only the scalar principal
symbol of $A$ is used in \eqref{eq:cyclic-lapse}.

\subsection{The Hamiltonian-type formula}
\label{sec:torsion-main}

Combining the preceding three residue calculations gives the
following formula. We then apply it to the perturbation induced
by a change of metric connection, using the same coordinate
exterior frame.

\begin{theorem}
\label{thm:general-main}
Let $M^{2m}$ be closed and oriented, with $m\geq2$. Let $g_t$ be
a smooth family of Riemannian metrics, $N(t,x)>0$ a smooth scalar
function, $f(t,x)$ a smooth real function, and $\Psi_t$ a smooth
self-adjoint endomorphism of
$E=\Lambda^*T^*M\otimes\mathbb C$. With coefficient differentiation
on this fixed bundle, the functional of
Definition~\ref{def:functional} satisfies
\begin{equation}
\label{eq:general-main}
\begin{aligned}
 \mathscr L_t(f)=2^n\nu_{n-1}\int_M
 \Bigg[&f\left\{
 \frac{(\trg \dot g)^2+2|\dot g|_g^2}{16N}
 -\frac14NR_g-3Nv_\Psi
 -\frac{n-2}{4N}|\dd N|_g^2
 \right\}\\
 &-\frac{(n-2)(n+4)}{16}\langle\dd f,\dd N\rangle_g
 \Bigg]\dd\vol_g .
\end{aligned}
\end{equation}
All quantities on the right are evaluated at $t$, and $v_\Psi$ is
given by \eqref{eq:vPsi}.
\end{theorem}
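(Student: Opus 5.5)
The plan is to evaluate the three lines of \eqref{eq:functional} separately, using the residue computations already established, and then to add the coefficients. The first line is already given by \eqref{eq:kinetic-result}, which rests on Proposition~\ref{prop:velocity}. It contributes
\[
 2^n\nu_{n-1}\int_M f\,\frac{(\trg\dot g)^2+2|\dot g|_g^2}{16N}\dd\vol_g .
\]
The $D^2\dot D^2$ term contributes nothing, because its order is at most $-n-1$. Only the principal symbol of $\dot D$ enters this line, which is why no $\dot\Psi$ and no derivative of $\dot g$ survives.

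For the third line, I will multiply the splitting \eqref{eq:lapse-split} on the left by the scalar $f$. With $A=D^2$ and $W=|D|^{-n-2}$ this gives
\[
 fNAN^{-1}ANW=fN|D|^{2-n}-fPW
\]
modulo smoothing operators. This step is legitimate because left multiplication by $f$ preserves smoothing remainders and $\Wres$ vanishes on them. It also uses $A W=|D|^{2-n}$ up to the kernel projection, which is smoothing. Hence
\[
 \Wres(fND^2N^{-1}D^2N|D|^{-n-2})=\Wres(fN|D|^{2-n})-\Wres(fPW).
\]
The total coefficient of $\Wres(fN|D|^{2-n})$ in $\mathscr L_t(f)$ therefore becomes
\[
 \frac{6}{n-2}+\frac{n(n-2)}{16}-\frac{n(n-2)}{16}=\frac6{n-2},
\]
and the remaining piece is $+\frac{n(n-2)}{16}\Wres(fPW)$.

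Next I insert the weighted KKW formula, Proposition~\ref{prop:weighted-KKW}, with weight $fN$ (the display following that proposition). Then
\[
 \frac6{n-2}\,\Wres(fN|D|^{2-n})
 =2^n\nu_{n-1}\int_M fN\left(-\frac14R_g-3v_\Psi\right)\dd\vol_g .
\]
For the lapse term I apply Corollary~\ref{cor:weighted-lapse}, which gives
\[
 \frac{n(n-2)}{16}\Wres(fPW)
 =2^n\nu_{n-1}\int_M\left[-\frac{(n-2)(n+4)}{16}\langle\dd f,\dd N\rangle_g-\frac{n-2}{4N}f|\dd N|_g^2\right]\dd\vol_g .
\]
Adding the three contributions yields \eqref{eq:general-main}.

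I expect the main point requiring care to be the bookkeeping in the lapse term, not any computation. Two things must be checked. First, the weight $f$ has to be placed on the left, so that the local density identity $\operatorname{res}_x(fPW)=f\operatorname{res}_x(PW)$ applies. Second, the divergence $\Div_g\grad_gN$ from Proposition~\ref{prop:third-order} must be integrated by parts against $f$ and not discarded. This step is exactly what produces the mixed $\langle\dd f,\dd N\rangle_g$ term, so the cyclic argument of Proposition~\ref{prop:cyclic-lapse} cannot replace it when $f$ is nonconstant. Beyond that, one only needs to confirm the arithmetic of the coefficients.
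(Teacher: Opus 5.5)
Your proposal is correct and follows essentially the same route as the paper. The paper also rewrites the third term via \eqref{eq:lapse-split} as $\Wres(fN|D|^{2-n})-\Wres(fPW)$, which leaves coefficient $\frac{6}{n-2}$ on the weighted KKW term and $\frac{n(n-2)}{16}$ on $\Wres(fPW)$. It then inserts Proposition~\ref{prop:weighted-KKW} with weight $fN$ and Corollary~\ref{cor:weighted-lapse}, and adds the velocity term \eqref{eq:kinetic-result}, with the same coefficient arithmetic you give.
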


\begin{proof}
The first line of \eqref{eq:functional} is
\eqref{eq:kinetic-result}. For this calculation set
\[
 B=\Wres(fN|D|^{2-n}),\qquad
 F=\Wres(fND^2N^{-1}D^2N|D|^{-n-2}).
\]
By \eqref{eq:lapse-split}, $F=B-\Wres(fPW)$. Therefore the
remaining two lines of \eqref{eq:functional} are
\begin{align*}
 &\left(\frac6{n-2}+\frac{n(n-2)}{16}\right)B
       -\frac{n(n-2)}{16}F\\
 &\quad=\frac6{n-2}B+\frac{n(n-2)}{16}\Wres(fPW)\\
 &\quad=2^n\nu_{n-1}\int_M
 f\left(-\frac14NR_g-3Nv_\Psi-\frac{n-2}{4N}|\dd N|_g^2\right)\dd\vol_g\\
 &\qquad-2^n\nu_{n-1}\frac{(n-2)(n+4)}{16}
       \int_M\langle\dd f,\dd N\rangle_g\dd\vol_g.
\end{align*}
The last equality uses Proposition~\ref{prop:weighted-KKW}
and Corollary~\ref{cor:weighted-lapse}. Adding the velocity term proves
\eqref{eq:general-main}.
\end{proof}

\begin{corollary}
\label{cor:dependence}
The functional $\mathscr L_t(f)$ is real for real $f$. Its dependence on the
endomorphism family is through $v_{\Psi_t}$ alone; neither
$\partial_t\Psi_t$ nor spatial derivatives of $\Psi_t$ occur.
If $\partial_tg_t=0$, $\Psi_t=0$, and $N$ is spatially constant,
then
\[
 \mathscr L_t(f)=-2^{n-2}\nu_{n-1}N(t)
                         \int_M fR_{g_t}\dd\vol_{g_t}.
\]
\end{corollary}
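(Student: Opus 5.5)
The plan is to read every claim of Corollary~\ref{cor:dependence} directly off the closed formula \eqref{eq:general-main} of Theorem~\ref{thm:general-main}. No further residue calculation is needed.

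\emph{Reality.} For real $f$, each term in the integrand of \eqref{eq:general-main} is real:
\begin{itemize}
\item the quantities $\trg\dot g$, $|\dot g|_g^2$, $R_g$, $|\dd N|_g^2$ and $\langle\dd f,\dd N\rangle_g$ are real, since $g$, $\dot g$, $N$ and $f$ are real;
\item $v_\Psi$ is real by the conjugation argument following \eqref{eq:vPsi}, which used only $\Psi^*=\Psi$, the anti-self-adjointness of $\antic{c_g(dx^i)}{\Psi}$, and cyclicity of $\trE$;
\item the prefactor $2^n\nu_{n-1}$ and the rational coefficients are real.
\end{itemize}
Hence $\mathscr L_t(f)\in\mathbb R$.

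\emph{Dependence on $\Psi_t$.} The right side of \eqref{eq:general-main} contains $\Psi_t$ only through the term $-3Nv_{\Psi_t}$. Moreover $v_\Psi$ in \eqref{eq:vPsi} is a pointwise algebraic expression in $\Psi$ at fixed $t$. So neither $\dot\Psi$ nor $\nabla\Psi$ occurs. The explanation is already in the earlier results:
\begin{itemize}
\item $\dot\Psi$ entered $\dot D$ only as a zero-order coefficient (Lemma~\ref{lem:full-variation}), and so it dropped out of the order-$(-n)$ velocity symbols in Proposition~\ref{prop:velocity};
\item the $\nabla\Psi$ terms cancelled under the fiber trace in Lemma~\ref{lem:potential};
\item the $b^j$ terms, which contain $\Psi$, cancelled by angular integration in Proposition~\ref{prop:third-order}.
\end{itemize}
I would mention these briefly as the explanation, but the proof itself only cites the formula.

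\emph{Specialization.} Now set $\dot g=0$, $\Psi=0$ and $\dd N=0$, so that $N=N(t)$. Then:
\begin{itemize}
\item the velocity term vanishes;
\item $v_0=0$ from \eqref{eq:vPsi};
\item $|\dd N|_g^2=0$;
\item $\langle\dd f,\dd N\rangle_g=0$.
\end{itemize}
What remains is
\[
2^n\nu_{n-1}\int_M f\left(-\tfrac14N R_g\right)\dd\vol_g=-2^{n-2}\nu_{n-1}N(t)\int_M fR_{g_t}\dd\vol_{g_t}.
\]

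The only point requiring care is the second claim: the statement about derivatives of $\Psi$ must be read as a statement about the final expression. This is justified because Theorem~\ref{thm:general-main} holds for arbitrary smooth $\Psi_t$. I expect no genuine obstacle.
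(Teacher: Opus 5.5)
Your proposal is correct and follows essentially the same route as the paper. You get reality from $\overline{v_\Psi}=v_\Psi$ and the real coefficients of \eqref{eq:general-main}, the absence of $\partial_t\Psi_t$ from $\sigma_1(\partial_t\Psi)=0$ in the velocity residues, the absence of $\nabla\Psi$ from the fiber-trace cancellation in Lemma~\ref{lem:potential}, and the special case by direct substitution. You also note that the $\Psi$-dependent coefficients $b^j$ in the lapse correction cancel only after angular integration in Proposition~\ref{prop:third-order}; this is correct, and the paper's proof leaves it implicit.
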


\begin{proof}
The adjoint calculation following \eqref{eq:vPsi} gives
$\overline{v_\Psi}=v_\Psi$. Since $f,N$ and the metric are real,
\eqref{eq:general-main} implies
\[
 \overline{\mathscr L_t(f)}=\mathscr L_t(f)\in\mathbb R.
\]
For the temporal derivative, \eqref{eq:velocity-symbol} gives
\[
 \sigma_1(\dot D)
 =-\ii(\partial_tg^{ij})\xi_j\iot(\partial_i),
 \qquad \sigma_1(\partial_t\Psi)=0.
\]
Only this first-order symbol enters the velocity residues in
Proposition~\ref{prop:velocity}. For spatial derivatives, use
$\nabla_j=\nabla^{E,g}_{\partial_j}$ and its induced action on
endomorphism-valued tensors. Lemma~\ref{lem:potential} gives
\begin{align*}
 &\trE\!\left(c_g(dx^j)(\nabla_j\Psi)
       -\frac12\nabla_j\antic{c_g(dx^j)}{\Psi}\right)\\
 &=\trE\bigl(c_g(dx^j)(\nabla_j\Psi)\bigr)
       -\frac12\trE\antic{c_g(dx^j)}{\nabla_j\Psi}=0.
\end{align*}
Thus the remaining perturbation term is the algebraic
contraction $v_\Psi$ in \eqref{eq:vPsi}.
Under the additional hypotheses,
\[
 \partial_tg_t=0,\quad\Psi_t=0,\quad N=N(t)
 \quad\Longrightarrow\quad
 \dot g=0,\quad v_\Psi=0,\quad \dd N=0.
\]
Hence
\begin{align*}
 \mathscr L_t(f)
 &=2^n\nu_{n-1}\int_M f\left(-\frac14N(t)R_{g_t}\right)
                                      \dd\vol_{g_t}\\
 &=-2^{n-2}\nu_{n-1}N(t)\int_M fR_{g_t}\dd\vol_{g_t}.
\end{align*}
\end{proof}

We finally record how a metric connection enters this formula.
This is the coordinate exterior representation of the induced
connection, as in \eqref{eq:exterior-connection}; related Hodge
operators with torsion are considered in
\cite[Section~3.2]{BDSZ2026}.

\begin{proposition}[Coordinate expression for a connection perturbation]
\label{prop:coordinate-perturbation}
Let $\widetilde\nabla$ be a metric connection on $TM$, and define
the tensor $\mathcal A$ by
$\widetilde\nabla_XY=\nabla^g_XY+\mathcal A(X,Y)$.
Write $\mathcal A(\partial_i,\partial_b)
=\mathcal A^a{}_{ib}\partial_a$. Its induced connection on
$E$ is
\begin{equation}
\label{eq:coordinate-metric-connection}
 \widetilde\nabla^E_{\partial_i}
 =\partial_i-
   \bigl(\Gamma(g)^a{}_{ib}+\mathcal A^a{}_{ib}\bigr)
          \eps(dx^b)\iot(\partial_a).
\end{equation}
Consequently the induced Dirac-type operator satisfies
\begin{equation}
\label{eq:coordinate-connection-Psi}
 \widetilde D
 :=c_g(dx^i)\widetilde\nabla^E_{\partial_i}
 =D_g^0+\Psi_{\mathcal A},
 \qquad
 \Psi_{\mathcal A}
 =-\mathcal A^a{}_{ib}c_g(dx^i)\eps(dx^b)\iot(\partial_a).
\end{equation}
It is formally self-adjoint if and only if
$\sum_i\mathcal A^i{}_{ib}=0$ for every $b$.
For a smooth family satisfying this condition,
Theorem~\ref{thm:general-main} applies with
$\Psi=\Psi_{\mathcal A}$.
\end{proposition}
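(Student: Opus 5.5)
The argument has three parts. First we derive the induced connection on $E$ from the dual connection. Then we compute $\widetilde D - D_g^0$ using the identities already established for $D_g^0$. Finally we reduce formal self-adjointness to a pointwise condition on $\Psi_{\mathcal A}$.

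\emph{Induced connection.} Since $\widetilde\nabla_{\partial_i}\partial_b=(\Gamma(g)^a{}_{ib}+\mathcal A^a{}_{ib})\partial_a$, the dual connection satisfies
\[
 \widetilde\nabla_{\partial_i}dx^a=-(\Gamma(g)^a{}_{ib}+\mathcal A^a{}_{ib})dx^b .
\]
We extend this to $\Lambda^*T^*M$ by the Leibniz rule. The operator $\eps(dx^b)\iot(\partial_a)$ replaces each $dx^a$ in a basis form by $dx^b$, with the sign cancellation already noted for \eqref{eq:exterior-connection}. This gives \eqref{eq:coordinate-metric-connection} verbatim; the only change from \eqref{eq:exterior-connection} is the substitution $\Gamma\mapsto\Gamma+\mathcal A$.

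\emph{Formula for $\widetilde D$.} The derivation of \eqref{eq:Hodge-coordinate} shows that $D_g^0=c_g(dx^i)\nabla^{E,g}_{\partial_i}$. Subtracting this from the definition of $\widetilde D$ leaves exactly
\[
 -\mathcal A^a{}_{ib}\,c_g(dx^i)\eps(dx^b)\iot(\partial_a),
\]
which is \eqref{eq:coordinate-connection-Psi}. This step is immediate.

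\emph{Self-adjointness.} Since $D_g^0$ is formally self-adjoint and $\Psi_{\mathcal A}$ has order zero, $\widetilde D$ is formally self-adjoint if and only if $\Psi_{\mathcal A}^*=\Psi_{\mathcal A}$ pointwise. This is the step I expect to be the main obstacle. The plan is to work at a point in a $g$-orthonormal frame, so that $\iot(e_a)=\eps(e^a)^*$ and $c(e^i)=\eps(e^i)-\iot(e_i)$. Metric compatibility of $\widetilde\nabla$ gives $\mathcal A_{aib}=-\mathcal A_{bia}$ (skew in the outer indices). Hence
\[
 \Psi_{\mathcal A}=-\tfrac12\mathcal A_{aib}\,c(e^i)\bigl(\eps(e^b)\iot(e_a)-\eps(e^a)\iot(e_b)\bigr).
\]
For each fixed $i$, the term $T_i=\tfrac12\mathcal A_{aib}(\eps^b\iot_a-\eps^a\iot_b)$ is skew-adjoint, and $c(e^i)$ is also skew-adjoint. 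Therefore
\[
 \Psi_{\mathcal A}^*=-(c^iT_i)^*=-T_i^*c^{i*}=-T_ic^i,
\]
and
\[
 \Psi_{\mathcal A}-\Psi_{\mathcal A}^*=-[c^i,T_i].
\]
To evaluate this commutator, use \eqref{eq:CAR}: the commutator of $\eps^b\iot_a$ with $\eps^i$ is $\delta_{ai}\eps^b$, and with $\iot_i$ it is $-\delta_{bi}\iot_a$. Then $[c^i,\eps^b\iot_a]$ equals $-\delta_{ai}\eps^b$ minus a $\iot$-term, and after summing over $i$ and antisymmetrizing the result collapses to a combination of $\eps(e^b)$ and $\iot(e_b)$ with coefficient proportional to $\sum_i\mathcal A_{iib}$. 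I expect
\[
 \Psi_{\mathcal A}-\Psi_{\mathcal A}^*=\kappa\sum_{i,b}\mathcal A_{iib}\bigl(\eps(e^b)+\iot(e_b)\bigr)
\]
for some nonzero $\kappa$, with the sign to be checked. The operators $\eps(e^b)+\iot(e_b)$ are linearly independent for distinct $b$, so this difference vanishes if and only if $\sum_i\mathcal A^i{}_{ib}=0$ for every $b$. The main care needed is to keep track of index positions (raising and lowering is trivial in an orthonormal frame) and the ordering in $\mathcal A(X,Y)$.

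\emph{Application of Theorem~\ref{thm:general-main}.} Once the trace condition holds along the family, $\Psi_{\mathcal A}$ is a smooth self-adjoint endomorphism. The hypotheses of Theorem~\ref{thm:general-main} are then satisfied with $\Psi=\Psi_{\mathcal A}$, which gives the last sentence of the proposition.
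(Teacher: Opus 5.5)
Your proposal follows the paper's proof: the same dual-connection and Leibniz-rule derivation followed by subtraction, and the same reduction of $\Psi_{\mathcal A}^*-\Psi_{\mathcal A}$ to $\sum_i[K_i,c_g(dx^i)]$ with $K_i=\widetilde\nabla^E_{\partial_i}-\nabla^{E,g}_{\partial_i}$ skew-adjoint (your $T_i$ is $-K_i$). The only difference is that you evaluate this commutator with the CAR relations, while the paper uses Clifford compatibility, $[K_i,c_g(dx^j)]=-\mathcal A^j{}_{ib}c_g(dx^b)$; the two are equivalent.

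One correction to your expected answer: carrying out your computation gives $\Psi_{\mathcal A}-\Psi_{\mathcal A}^*=\sum_{i,b}\mathcal A_{iib}\,c(e^b)=\sum_{i,b}\mathcal A_{iib}\bigl(\eps(e^b)-\iot(e_b)\bigr)$, with coefficient one. It cannot be a real multiple of $\eps(e^b)+\iot(e_b)$, which is self-adjoint, whereas $\Psi_{\mathcal A}-\Psi_{\mathcal A}^*$ is skew-adjoint. The linear-independence conclusion, and hence the trace criterion $\sum_i\mathcal A^i{}_{ib}=0$, is unaffected.
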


\begin{proof}
The dual connection on coordinate one-forms is determined by
\begin{align*}
 0&=\partial_i\bigl(dx^a(\partial_b)\bigr)\\
  &=(\widetilde\nabla_{\partial_i}dx^a)(\partial_b)
     +dx^a(\widetilde\nabla_{\partial_i}\partial_b).
\end{align*}
Thus $\widetilde\nabla_{\partial_i}dx^a
=-(\Gamma(g)^a{}_{ib}+\mathcal A^a{}_{ib})dx^b$.
The induced connection acts on exterior products by the
Leibniz rule. Since $\eps(dx^b)\iot(\partial_a)$ replaces
each factor $dx^a$ by $dx^b$, this proves
\eqref{eq:coordinate-metric-connection} on every exterior degree.
Subtracting \eqref{eq:exterior-connection} and multiplying by
$c_g(dx^i)$ gives \eqref{eq:coordinate-connection-Psi}.
In particular, the change of connection contributes only a
zero-order term; the Clifford principal symbol is unchanged.

For the adjoint assertion, let
$K_i=\widetilde\nabla^E_{\partial_i}-\nabla^{E,g}_{\partial_i}$
within this proof. Metric compatibility gives
$K_i^*=-K_i$. Compatibility with Clifford multiplication gives
\[
 [K_i,c_g(dx^j)]=-\mathcal A^j{}_{ib}c_g(dx^b).
\]
It follows from $c_g(dx^i)^*=-c_g(dx^i)$ that
\begin{align*}
 \Psi_{\mathcal A}^*-\Psi_{\mathcal A}
 &=\sum_i\bigl(K_i c_g(dx^i)-c_g(dx^i)K_i\bigr)\\
 &=-\sum_{i,b}\mathcal A^i{}_{ib}c_g(dx^b).
\end{align*}
Clifford multiplication by a real covector is zero only if
that covector is zero, because its square is minus its
squared norm times $\Id_E$. Since $D_g^0$ is formally
self-adjoint, the stated criterion follows. Under this
condition, \eqref{eq:coordinate-connection-Psi} satisfies
the hypotheses of Theorem~\ref{thm:general-main}.
\end{proof}

The trace condition holds, in particular, when
$g(\mathcal A(X,Y),Z)$ is a three-form: contraction of its
first and third arguments is then zero. For a general
metric connection it is a separate condition and does
not follow from metric compatibility alone.
The operator $\widetilde D$ denotes the induced perturbed
operator; only when the perturbation vanishes is it
identified here with the unperturbed $d+\delta_g$.

\section{Boundary residues and the weighted functional}
\label{sec:boundary}

We compute the two boundary residues associated with the weighted
curvature term and the lapse commutator. After specifying their
factorizations, we derive the boundary symbols and evaluate the terms
in the noncommutative residue formula. The weighted curvature residue
has five possible contributions, while the commutator residue has
only one. Combining these calculations with the local formulas of
Section~\ref{sec:residue-calculations} gives the boundary formula.

\subsection{Boundary geometry and the residue formula}

Let $M^n$ be compact and oriented, with smooth boundary and
$n=2m\ge4$. On a fixed collar
$\partial M\times[0,\epsilon)$, assume that
\begin{equation}
\label{eq:boundary-collar}
 g_t=h_t(x_n)^{-1}g_t^{\partial M}+dx_n^2,
 \qquad h_t>0,\qquad h_t(0)=1.
\end{equation}
Here $x_n$ increases into $M$, the metric $g_t^{\partial M}$ is
pulled back from $\partial M$, and all coefficients are smooth in
$t$ and up to $x_n=0$. The scalar function $h_t$ is distinct from
the metric variation $\dot g_t=\partial_tg_t$. We allow both the
positive lapse $N$ and the real test function $f$ to depend on all
spatial variables. The self-adjoint endomorphism $\Psi_t$ is smooth
up to the boundary. We use the fixed coordinate exterior bundle,
with $D=D_{g_t}^0+\Psi_t$ and $A=D^2$, as in Section~\ref{sec:preliminaries}.
Unless a temporal derivative is displayed, $t$ is fixed and suppressed.

Extend the coefficients smoothly across the boundary and use
classical parametrices of the extended elliptic operator $D$.
The integer negative powers below are understood modulo smoothing
operators; in particular, $D^{-n-2}=|D|^{-n-2}$ in this sense.
These are not inverses of an operator with a boundary condition.
Their residue symbols depend only on the coefficient jets on $M$.

For an operator $T$ satisfying the transmission condition, put
$\pi^+T=r^+Te^+$, where $e^+$ is extension by zero and $r^+$
is restriction to the interior. The differential operator $D$,
its integer-order parametrices, and their compositions satisfy
this condition. For the negative-order factors used below, the
noncommutative residue of \cite{FGLS1996} splits as
\begin{equation}
\label{eq:boundary-split}
 \Wresb[\pi^+U\circ\pi^+V]
 =\int_M\operatorname{res}_x(UV)
   +\int_{\partial M}\Phi(U,V)\dd\vol_{g^{\partial M}},
\end{equation}
where $\operatorname{res}_x$ denotes the interior density with
the normalization \eqref{eq:Wres}. Thus $\Phi(U,V)$ is the
scalar coefficient of the boundary density.

Fix $x_0\in\partial M$ and choose $g^{\partial M}$-normal
coordinates $x'=(x^1,\ldots,x^{n-1})$ centered there. Write
$\xi=\xi'+\xi_n dx_n$. At $(x_0,0)$ the boundary coefficient is
\begin{equation}
\label{eq:FGLS-new}
\begin{aligned}
 \Phi(U,V)={}&
 \int_{|\xi'|_{g^{\partial M}}=1}\int_{\mathbb R}
 \sum_{\substack{j,k\ge0,\ \alpha\in\mathbb N_0^{n-1}\\
 r+\ell-j-k-|\alpha|-1=-n}}
 \frac{(-\ii)^{|\alpha|+j+k+1}}{\alpha!(j+k+1)!}
 \\
 &\quad\cdot\trE\left[
 \partial_{x_n}^{j}\partial_{\xi'}^{\alpha}\partial_{\xi_n}^{k}
       \sigma_r^+(U)\,
 \partial_{x'}^{\alpha}\partial_{\xi_n}^{j+1}\partial_{x_n}^{k}
       \sigma_{\ell}(V)
 \right]\dd\xi_n\dd S_{g^{\partial M}}(\xi').
\end{aligned}
\end{equation}
The indices satisfy $r\le\operatorname{ord}U$ and
$\ell\le\operatorname{ord}V$. Here
$\sigma_r^+(U)=\pi_{\xi_n}^+\sigma_r(U)$, where the normal
Hardy projection retains the principal parts at the upper-half-plane
poles of the rational symbol. This projection on symbols is
distinguished from the operator truncation $\pi^+T$.
All base derivatives are taken before evaluation at $(x_0,0)$,
and all tangential covariable derivatives before restriction to
$|\xi'|_{g^{\partial M}}=1$.
The convention is the symbol product \eqref{eq:composition};
see also \cite{Wang2007,WangWang2025Torsion}.

Recall $P=NA[A,N^{-1}]N$. We consider
\begin{align}
 \mathcal B_t(f,N)
 &:=\Wresb[\pi^+(fND^{-1})\circ\pi^+(D^{-n+3})],
 \label{eq:boundary-B}\\
 \mathcal C_t(f,N)
 &:=\Wresb[\pi^+(fPD^{-5})\circ\pi^+(D^{-n+3})].
 \label{eq:boundary-C}
\end{align}
The left factors have orders at most $-1$ and $-2$, respectively;
the right factor has order $3-n$. Their interior products are
$fN|D|^{2-n}$ and $fP|D|^{-n-2}$. These factorizations are part
of the definitions of the boundary functionals.

The identity \eqref{eq:lapse-split} implies
\begin{equation}
\label{eq:boundary-third-identity}
 fNAN^{-1}AN D^{-5}=fND^{-1}-fPD^{-5}
 \quad\bmod\Psi^{-\infty}.
\end{equation}
By linearity, the residue with left factor
$fNAN^{-1}AN D^{-5}$ and right factor $D^{-n+3}$ is
$\mathcal B_t(f,N)-\mathcal C_t(f,N)$.

\subsection{Boundary symbols and normal integrations}

Throughout this subsection all coefficients are evaluated at
$(x_0,0)$ after differentiation. Set $\eta=h'(0)$, and let
$a,b<n$ denote tangential indices. Before restriction to the
unit tangential sphere,
\[
 |\xi|_g^2=h(x_n)|\xi'|_{g^{\partial M}}^2+\xi_n^2,
 \qquad
 |\xi'|_{g^{\partial M}}^2=(g^{\partial M})^{ab}\xi_a\xi_b.
\]
The coordinate exterior frame is used in all symbol calculations.

\begin{lemma}[Connection and first-order symbols]
\label{lem:boundary-symbols-new}
The nonzero Christoffel coefficients at the chosen point are
\[
 \Gamma^n{}_{ab}=\frac\eta2\delta_{ab},\qquad
 \Gamma^a{}_{bn}=\Gamma^a{}_{nb}=-\frac\eta2\delta^a_b.
\]
The zero-order symbol of $D$ is
\begin{equation}
\label{eq:boundary-pzero}
 p_0:=\sigma_0(D)
 =\eta\left(\frac{n-1}{2}\Id_E
       -\sum_{a<n}\eps(dx^a)\iot(\partial_a)\right)
          \iot(\partial_n)+\Psi,
\end{equation}
and the first-order symbol of $A=D^2$ is
\begin{equation}
\label{eq:boundary-Aone}
\begin{aligned}
 \sigma_1(A)={}&\ii\eta\sum_{a<n}\xi_a
       \bigl(\eps(dx^a)\iot(\partial_n)
             -\eps(dx_n)\iot(\partial_a)\bigr)\\
 &+\ii\eta\xi_n\left(\frac{n-1}{2}\Id_E
                  -\sum_{a<n}\eps(dx^a)\iot(\partial_a)\right)
       +\ii\{c_g(\xi),\Psi\}.
\end{aligned}
\end{equation}
Furthermore,
$\partial_{x_a}|\xi|_g^2=\partial_{x_a}c_g(\xi)=0$ for $a<n$,
while
\[
 \partial_{x_n}|\xi|_g^2=\eta|\xi'|_{g^{\partial M}}^2,
 \qquad
 \partial_{x_n}c_g(\xi)
       =-\eta\sum_{a<n}\xi_a\iot(\partial_a).
\]
\end{lemma}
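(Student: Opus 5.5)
The plan is to compute everything at $(x_0,0)$ directly from the collar form \eqref{eq:boundary-collar}, using the coordinate exterior frame and the formulas \eqref{eq:exterior-connection}, \eqref{eq:Hodge-coordinate}, and the composition rule \eqref{eq:composition}.

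\textbf{Christoffel symbols and the last statement.} In the coordinates $(x',x_n)$ we have $g_{ab}=h(x_n)^{-1}g^{\partial M}_{ab}(x')$, $g_{an}=0$ and $g_{nn}=1$. The $g^{\partial M}$-normal coordinates kill all tangential first derivatives at $x_0$, and $h(0)=1$. Hence the only surviving first derivative is
\[
 \partial_n g_{ab}=-h'(0)\delta_{ab}=-\eta\delta_{ab}.
\]
The Koszul formula then gives $\Gamma^n{}_{ab}=-\tfrac12\partial_ng_{ab}=\tfrac\eta2\delta_{ab}$ and $\Gamma^a{}_{bn}=\tfrac12 g^{ac}\partial_ng_{cb}=-\tfrac\eta2\delta^a_b$; all other components vanish.

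For the last assertion of the lemma, write $|\xi|_g^2=h|\xi'|^2_{g^{\partial M}}+\xi_n^2$ and $c_g(\xi)=\eps(\xi)-g^{ij}\xi_j\iot(\partial_i)$. Tangential derivatives vanish at $x_0$ because $h$ depends only on $x_n$ and $\partial_cg^{\partial M}(x_0)=0$. The normal derivatives follow from $\partial_ng^{ab}=\eta\delta^{ab}$ at the point.

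\textbf{Zero-order symbol of $D$.} Since $\sum_j\eps(dx^j)\omega_j=0$, the only zero-order term of $D_g^0$ is $-g^{ij}\iot(\partial_i)\omega_j$, that is,
\[
 g^{ij}\Gamma^a{}_{jb}\,\iot(\partial_i)\eps(dx^b)\iot(\partial_a).
\]
I will insert the three nonzero Christoffel types:
\begin{itemize}
 \item $i=j=n$ with $\Gamma^a{}_{nb}$, which gives $-\tfrac\eta2\sum_a\iot(\partial_n)\eps(dx^a)\iot(\partial_a)$;
 \item $i=j=c$ with $\Gamma^n{}_{cc}$, which gives $\tfrac\eta2\sum_c\iot(\partial_c)\eps(dx^c)\iot(\partial_n)$;
 \item $i=j=c$ with $\Gamma^c{}_{cn}$, which gives $-\tfrac\eta2\sum_c\iot(\partial_c)\eps(dx_n)\iot(\partial_c)$.
\end{itemize}
Using \eqref{eq:CAR}, I move every $\iot(\partial_n)$ to the right. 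This produces $\iot(\partial_c)\eps(dx^c)=\Id-\eps(dx^c)\iot(\partial_c)$ and $\iot(\partial_c)\eps(dx_n)\iot(\partial_c)=-\eps(dx_n)\iot(\partial_c)\iot(\partial_c)=0$. Collecting the coefficients of $\Id\,\iot(\partial_n)$ and of $\eps(dx^a)\iot(\partial_a)\iot(\partial_n)$ should produce \eqref{eq:boundary-pzero}. The $\Psi$ term is then added unchanged.

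\textbf{First-order symbol of $A$.} By \eqref{eq:composition},
\[
 \sigma_1(A)=\ii\{c_g(\xi),p_0\}
 +\sum_j\partial_{\xi_j}(\ii c_g(\xi))\,D_{x_j}(\ii c_g(\xi)).
\]
By the previous paragraph, only $j=n$ contributes to the sum, and it equals $\ii\,c_g(dx_n)\bigl(-\eta\sum_a\xi_a\iot(\partial_a)\bigr)$. The $\Psi$ part of the anticommutator gives $\ii\{c_g(\xi),\Psi\}$ directly. The remaining work is to expand $\{c_g(\xi),\eta(\tfrac{n-1}2-\sum_a\eps(dx^a)\iot(\partial_a))\iot(\partial_n)\}$ with CAR, splitting $c_g(\xi)$ into its $\xi_n$ and $\xi_a$ parts, and then add the derivative term. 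The $\xi_n$ part should yield $\ii\eta\xi_n(\tfrac{n-1}2-\sum\eps(dx^a)\iot(\partial_a))$. The $\xi_a$ part, after the terms with two interior or two exterior factors cancel against the derivative term, should yield $\ii\eta\xi_a(\eps(dx^a)\iot(\partial_n)-\eps(dx_n)\iot(\partial_a))$.

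I expect this sign and ordering bookkeeping in the anticommutator to be the main obstacle. I would verify it term by term on basis forms $dx^J$, distinguishing whether $n\in J$ and whether $a\in J$.
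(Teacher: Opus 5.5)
Your proposal is correct. For the Christoffel symbols, the derivative identities, and $p_0$ you follow the paper closely. The only cosmetic difference is that the paper writes the zero-order part as $\sum_i c_g(dx^i)\omega_i$, with explicit connection matrices $\omega_a,\omega_n$ and one Clifford identity per tangential index. You first drop $\sum_j\eps(dx^j)\omega_j=0$ and then expand $-g^{ij}\iot(\partial_i)\omega_j$ by Christoffel type. Your three bullets are right and sum to \eqref{eq:boundary-pzero}.

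The genuine difference is in $\sigma_1(A)$. The paper does not use the composition formula there. By the Weitzenb\"ock formula, the first-order coefficient of $(D_g^0)^2$ is that of the Bochner Laplacian, $-2g^{ij}\omega_i+g^{ik}\Gamma^j{}_{ik}\Id_E$. This can be read off from $\omega_a,\omega_n$ without Clifford algebra, and $\Psi$ adds $\{c_g(dx^j),\Psi\}$. Your route, $\sigma_1(A)=\ii\{c_g(\xi),p_0\}+\ii c_g(dx_n)\,\partial_{x_n}c_g(\xi)$, avoids the Weitzenb\"ock formula and cross-checks $p_0$ against $\partial_{x_n}c_g(\xi)$. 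Its cost is the CAR bookkeeping you flagged.

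That bookkeeping is short if you write $p_0-\Psi=\eta K\iot(\partial_n)$ with $K=\frac{n-1}{2}\Id_E-N'$ and $N'=\sum_{b<n}\eps(dx^b)\iot(\partial_b)$. Then $N'$ commutes with $\eps(dx_n)$ and $\iot(\partial_n)$, and $[N',\eps(dx^a)]=\eps(dx^a)$, $[N',\iot(\partial_a)]=-\iot(\partial_a)$. Hence $\{c_g(dx_n),K\iot(\partial_n)\}=K$, which gives the $\xi_n$ line. At the point, $\{c_g(dx^a),K\iot(\partial_n)\}=(\eps(dx^a)+\iot(\partial_a))\iot(\partial_n)$, while the derivative term contributes $-c_g(dx_n)\iot(\partial_a)=-\eps(dx_n)\iot(\partial_a)-\iot(\partial_a)\iot(\partial_n)$.

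So your description of the cancellation is slightly off. Only the $\iot(\partial_a)\iot(\partial_n)$ piece cancels against the derivative term, and that term also supplies the surviving $-\eps(dx_n)\iot(\partial_a)$. Any two-exterior-factor terms from expanding $N'$ cancel inside the anticommutator itself. The outcome is exactly \eqref{eq:boundary-Aone}, as you predicted.
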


\begin{proof}
At the boundary center, $g_{ab}=\delta_{ab}$,
$\partial_{x_n}g_{ab}=-\eta\delta_{ab}$,
$g_{an}=0$, and $g_{nn}=1$. Substitution in the Christoffel
formula gives
$\Gamma^n{}_{ab}=-\tfrac12\partial_{x_n}g_{ab}$ and
$\Gamma^a{}_{bn}=\tfrac12g^{ac}\partial_{x_n}g_{bc}$;
the remaining coefficients vanish. By
\eqref{eq:exterior-connection}, the induced connection matrices are
\[
 \omega_a=\frac\eta2
   \bigl(\eps(dx_n)\iot(\partial_a)
              -\eps(dx^a)\iot(\partial_n)\bigr),\qquad
 \omega_n=\frac\eta2\sum_{a<n}\eps(dx^a)\iot(\partial_a).
\]
The zero-order part of $D_g^0$ is $\sum_i c_g(dx^i)\omega_i$.
For each $a<n$, the anticommutation relations \eqref{eq:CAR} give
\begin{align*}
 &c_g(dx^a)\bigl(\eps(dx_n)\iot(\partial_a)
                    -\eps(dx^a)\iot(\partial_n)\bigr)
       +c_g(dx_n)\eps(dx^a)\iot(\partial_a)\\
 &\qquad
   =\bigl(\Id_E-2\eps(dx^a)\iot(\partial_a)\bigr)
                       \iot(\partial_n).
\end{align*}
Summing and adding $\Psi$ proves \eqref{eq:boundary-pzero}.
The first-order coefficient of $(D_g^0)^2$ is
$-2g^{ij}\omega_i+g^{ab}\Gamma^j{}_{ab}\Id_E$.
The additional terms $D_g^0\Psi+\Psi D_g^0$ contribute
$\{c_g(dx^j),\Psi\}$ to the coefficient of $\partial_j$,
which proves \eqref{eq:boundary-Aone}.
Finally, differentiating
$c_g(\xi)=\eps(\xi)-g^{ij}\xi_i\iot(\partial_j)$
and the displayed quadratic symbol proves the derivative identities.
\end{proof}

Restrict now to $|\xi'|_{g^{\partial M}}=1$, and put $z=\xi_n$.
For the following pointwise formulas only, write
$c=c_g(\xi'+zdx_n)$. Let
$a_{-1}=\sigma_{-1}(D^{-1})$ and
$a_{-2}=\sigma_{-2}(D^{-1})$.
The first two parametrix equations are
\begin{align*}
 (\ii c_g(\xi))a_{-1}&=\Id_E,\\
 (\ii c_g(\xi))a_{-2}+p_0a_{-1}
       +\sum_j\partial_{\xi_j}(\ii c_g(\xi))D_{x_j}a_{-1}&=0.
\end{align*}
Since $c_g(\xi)^2=-|\xi|_g^2\Id_E$,
$a_{-1}=\ii c_g(\xi)|\xi|_g^{-2}$ before restriction. Thus
\[
 \partial_{x_n}a_{-1}
 =-\ii\eta\left[
    \frac{\sum_{a<n}\xi_a\iot(\partial_a)}{1+z^2}
                  +\frac{c}{(1+z^2)^2}\right].
\]
Only this base derivative survives in the second parametrix equation.
Using $D_{x_j}=-\ii\partial_{x_j}$ gives
\begin{align}
 a_{-1}&=\frac{\ii c}{1+z^2},
 \label{eq:boundary-inverse}\\
 a_{-2}
 &=\frac{cp_0c}{(1+z^2)^2}
   -\frac{\eta c\,c_g(dx_n)\sum_{a<n}\xi_a\iot(\partial_a)}
                {(1+z^2)^2}
   -\frac{\eta c\,c_g(dx_n)c}{(1+z^2)^3}.\notag
\end{align}

For the right factor set
$b_{3-n}=\sigma_{3-n}(D^{-n+3})$ and
$b_{2-n}=\sigma_{2-n}(D^{-n+3})$.
For any integer $k\ge1$, the first two symbols of $A^{-k}$ are
\begin{align*}
 \sigma_{-2k}(A^{-k})&=|\xi|_g^{-2k}\Id_E,\\
 \sigma_{-2k-1}(A^{-k})(x_0,0,\xi)
 &=-k\sigma_1(A)(1+z^2)^{-k-1}
       -\ii k(k+1)\eta z(1+z^2)^{-k-2}\Id_E.
\end{align*}
For $k=1$ the second identity follows from
\[
 \sigma_{-3}(A^{-1})
 =-|\xi|_g^{-2}\left[
       \sigma_1(A)|\xi|_g^{-2}
       +\sum_j\partial_{\xi_j}|\xi|_g^2
                             D_{x_j}(|\xi|_g^{-2})\Id_E\right].
\]
At the boundary, $D_{x_n}(|\xi|_g^{-2})
=\ii\eta(1+z^2)^{-2}$. In passing from $A^{-k}$ to
$A^{-k}A^{-1}$, the three subleading contributions have
coefficients $k(k+1)$, $2$, and $2k$, whose sum is
$(k+1)(k+2)$. This proves the identity by induction.
Now $D^{-n+3}=A^{-(m-1)}D$ modulo smoothing operators, so
\begin{align*}
 b_{2-n}={}&\sigma_{-n+2}(A^{-(m-1)})p_0
       +\sigma_{-n+1}(A^{-(m-1)})\,\ii c+\partial_z\sigma_{-n+2}(A^{-(m-1)})
                        D_{x_n}(\ii c_g(\xi)).
\end{align*}
Consequently,
\begin{align}
 b_{3-n}&=\frac{\ii c}{(1+z^2)^{m-1}},
 \label{eq:boundary-right-symbols}\\
 b_{2-n}
 &=\frac{p_0}{(1+z^2)^{m-1}}
   -\frac{\ii(m-1)\sigma_1(A)c}{(1+z^2)^m}
   +\frac{m(m-1)\eta zc}{(1+z^2)^{m+1}}\notag\\
 &\quad+\frac{2(m-1)\eta z
                         \sum_{a<n}\xi_a\iot(\partial_a)}{(1+z^2)^m}.
 \notag
\end{align}

\paragraph{The normal projection and the exterior traces.}
Partial fractions give
\begin{equation}
\label{eq:boundary-projections}
 \pi_z^+\frac1{1+z^2}=\frac1{2\ii(z-\ii)},\qquad
 \pi_z^+\frac z{1+z^2}=\frac1{2(z-\ii)},\qquad
 a_{-1}^+=\frac{c_g(\xi')+\ii c_g(dx_n)}{2(z-\ii)}.
\end{equation}
We also use
\begin{align*}
 \pi_z^+\frac1{(1+z^2)^2}
   &=-\frac1{4(z-\ii)^2}+\frac1{4\ii(z-\ii)},\\
 \pi_z^+\frac{1-z^2}{(1+z^2)^2}
   &=-\frac1{2(z-\ii)^2},\qquad
 \pi_z^+\frac z{(1+z^2)^2}=\frac1{4\ii(z-\ii)^2}.
\end{align*}
For a pole of order $q$ at $\ii$, these formulas follow from
\[
 \pi_z^+F(z)=
 \sum_{j=1}^{q}\frac1{(q-j)!}
 \left[\frac{d^{q-j}}{dz^{q-j}}
                    \bigl((z-\ii)^qF(z)\bigr)\right]_{z=\ii}
 (z-\ii)^{-j}.
\]
All symbols projected below are proper rational functions with
poles only at $\pm\ii$. Projection commutes with their base
derivatives, taken before evaluation at the boundary.

The exterior traces needed below are
\begin{align*}
 \trE\bigl(c_g(\alpha)c_g(\beta)\bigr)
   &=-2^ng^{-1}(\alpha,\beta),\\
 \trE\bigl(\eps(dx^a)\iot(\partial_b)\bigr)
   &=2^{n-1}\delta^a_b,\\
 \trE\bigl(\eps(dx^a)\iot(\partial_b)
                   \eps(dx^c)\iot(\partial_d)\bigr)
   &=2^{n-2}(\delta^a_b\delta^c_d+\delta^a_d\delta^c_b).
\end{align*}
They follow from \eqref{eq:CAR} and cyclicity of the ordinary
trace. Both Clifford squares have trace $-2^n$, whereas
$\trE(c_g(\xi')c_g(dx_n))=0$. Also,
\[
 \trE\left[\left(\sum_{a<n}\xi_a\iot(\partial_a)\right)
                                   c_g(\xi')\right]=2^{n-1},
 \qquad
 \trE\left[\left(\sum_{a<n}\xi_a\iot(\partial_a)\right)
                                   c_g(dx_n)\right]=0.
\]
Inserting these contractions in the derivative of $a_{-1}$ yields
\begin{align*}
 \trE\bigl((\partial_{x_n}a_{-1})^+c_g(\xi')\bigr)
     &=-\frac{\ii 2^{n-2}\eta}{(z-\ii)^2},\\
 \trE\bigl((\partial_{x_n}a_{-1})^+c_g(dx_n)\bigr)
     &=\frac{2^{n-2}\eta}{(z-\ii)^2}.
\end{align*}
For example, before projection the first trace is
$\ii 2^{n-1}\eta(1-z^2)/(1+z^2)^2$.

For subsequent substitutions it is useful to record the
unperturbed subleading traces. The notation $\left.\vphantom{a}T
\right|_{\Psi=0}$ means setting $\Psi$ equal to zero in the
displayed symbol, without changing the metric. Direct contraction gives
\begin{align}
 \trE\bigl(\left.a_{-2}\right|_{\Psi=0}c_g(\xi')\bigr)
    &=\frac{2^{n-1}\eta z(z^2-3)}{(1+z^2)^3},\notag\\
 \trE\bigl(\left.a_{-2}\right|_{\Psi=0}c_g(dx_n)\bigr)
    &=\frac{2^{n-1}\eta(1-3z^2)}{(1+z^2)^3},\notag\\
 \trE\bigl(c_g(\xi')\left.b_{2-n}\right|_{\Psi=0}\bigr)
    &=\frac{2^{n-1}\eta(m-1)z[z^2-(2m-1)]}{(1+z^2)^{m+1}},\notag\\
 \trE\bigl(c_g(dx_n)\left.b_{2-n}\right|_{\Psi=0}\bigr)
    &=\frac{2^{n-1}\eta(m-1)[1-(2m-1)z^2]}{(1+z^2)^{m+1}}.
 \label{eq:boundary-subleading-traces}
\end{align}
Here are the contractions giving these formulas. The two traces
$\trE((p_0-\Psi)c_g(\xi'))$ and
$\trE((p_0-\Psi)c_g(dx_n))$ vanish. Also,
\begin{align*}
 \trE\left[c\,c_g(dx_n)
       \left(\sum_{a<n}\xi_a\iot(\partial_a)\right)c_g(\xi')\right]
     &=-2^{n-1}z,\\
 \trE\left[c\,c_g(dx_n)
       \left(\sum_{a<n}\xi_a\iot(\partial_a)\right)c_g(dx_n)\right]
     &=2^{n-1},\\
 c\,c_g(dx_n)c&=(1-z^2)c_g(dx_n)-2z c_g(\xi').
\end{align*}
These identities prove the first two lines of
\eqref{eq:boundary-subleading-traces} by
\eqref{eq:boundary-inverse}. For the last two lines, the
nonzero contractions in the first line of
\eqref{eq:boundary-Aone} are determined by
\[
 \trE\left[
 \sum_{a<n}\xi_a\bigl(\eps(dx^a)\iot(\partial_n)
          -\eps(dx_n)\iot(\partial_a)\bigr)
                        c_g(dx_n)c_g(\xi')\right]=-2^{n-1}.
\]
The reversed Clifford product has the opposite trace. The
contractions of the second line's metric term with
$c_g(\xi')^2$, $c_g(dx_n)^2$, or
$c_g(\xi')c_g(dx_n)$ vanish. Substitution in
\eqref{eq:boundary-right-symbols} gives the stated expressions.
In particular, projection of the first two lines gives
\begin{equation}
\label{eq:boundary-subleading-projected}
 \trE\bigl((\left.a_{-2}\right|_{\Psi=0})^+c_g(\xi')\bigr)
       =\frac{2^{n-2}\eta}{(z-\ii)^3},\qquad
 \trE\bigl((\left.a_{-2}\right|_{\Psi=0})^+c_g(dx_n)\bigr)
       =\frac{\ii 2^{n-2}\eta}{(z-\ii)^3}.
\end{equation}

\paragraph{The scalar normal integrals.}
All remaining normal integrations reduce to
\begin{equation}
\label{eq:boundary-beta}
 \int_{\mathbb R}\frac{z^{2j}}{(1+z^2)^p}\dd z
 =\frac{\Gamma(j+\tfrac12)\Gamma(p-j-\tfrac12)}{\Gamma(p)},
 \qquad j\in\mathbb N_0,\quad p>j+\tfrac12.
\end{equation}
Indeed, the substitutions $y=z^2$ and $y=s/(1-s)$ give
\[
 \int_{\mathbb R}\frac{z^{2j}}{(1+z^2)^p}\dd z
 =\int_0^\infty\frac{y^{j-1/2}}{(1+y)^p}\dd y
 =\int_0^1s^{j-1/2}(1-s)^{p-j-3/2}\dd s,
\]
which is the beta integral. Odd integrands have zero integral.
For $m\ge2$, put
$a_m=\pi(2m-2)!/[2^{2m-1}(m-2)!m!]$. Then
\begin{equation}
\label{eq:boundary-am}
 \int_{\mathbb R}\frac{\dd z}{(1+z^2)^{m+1}}
       =\frac{2m-1}{m-1}a_m,
 \qquad
 \nu_{n-2}a_m=\frac{n-2}{2n}\nu_{n-1}.
\end{equation}
The second identity follows by substituting
$\nu_j=2\pi^{(j+1)/2}/\Gamma((j+1)/2)$ and $n=2m$.
The four integrals used in the calculations are therefore
\begin{align*}
 \int_{\mathbb R}\frac{z^2}{(1+z^2)^{m+1}}\dd z
       &=\frac{a_m}{m-1},\\
 \int_{\mathbb R}\frac{\dd z}{(1+z^2)^{m+2}}
       &=\frac{(2m+1)(2m-1)}{2(m+1)(m-1)}a_m,\\
 \int_{\mathbb R}\frac{z^2}{(1+z^2)^{m+2}}\dd z
       &=\frac{2m-1}{2(m+1)(m-1)}a_m.
\end{align*}
Together with the first identity in \eqref{eq:boundary-am},
these follow from \eqref{eq:boundary-beta} and
$\Gamma(s+1)=s\Gamma(s)$.

\subsection{The weighted KKW residue}

For this calculation put $w=fN$, and apply
\eqref{eq:FGLS-new} with $U=wD^{-1}$ and $V=D^{-n+3}$.
Writing $r=-1-a$ and $\ell=3-n-b$, where $a,b\ge0$,
the order condition is
\[
 (-1-a)+(3-n-b)-j-k-|\alpha|-1=-n
 \quad\Longleftrightarrow\quad a+b+j+k+|\alpha|=1.
\]
Thus there are five cases. Let $\Phi_1,\ldots,\Phi_5$
denote their contributions, including both the normal integral
and the tangential sphere integral, so that
\[
 \Phi(wD^{-1},D^{-n+3})=\Phi_1+\Phi_2+\Phi_3+\Phi_4+\Phi_5.
\]
In each case the parameters in \eqref{eq:FGLS-new} are
specified before substitution. We write $\dd S(\xi')$ for
$\dd S_{g^{\partial M}}(\xi')$ at the chosen point.
Terms odd in $z$ or $\xi'$ integrate to zero over
$\mathbb R$ or the unit tangential sphere, respectively.
When such terms are omitted below, the resulting equality
is an equality of integrals, not of the original integrands.

\paragraph{Case (a)(I).}
Here $r=-1$, $\ell=3-n$, $j=k=0$, and $|\alpha|=1$.
The coefficient in \eqref{eq:FGLS-new} is $(-\ii)^2=-1$.
Consequently,
\begin{align*}
 \Phi_1
 &=-\sum_{a<n}\int_{|\xi'|_{g^{\partial M}}=1}\int_{\mathbb R}
   \trE\left[
      \partial_{\xi_a}\sigma_{-1}^+(wD^{-1})\,
      \partial_{x_a}\partial_z\sigma_{3-n}(D^{-n+3})
         \right]\dd z\dd S(\xi')\\
 &=-w\sum_{a<n}\int_{|\xi'|_{g^{\partial M}}=1}\int_{\mathbb R}
   \trE\left[(\partial_{\xi_a}a_{-1}^+)
                          \partial_{x_a}\partial_zb_{3-n}\right]
                       \dd z\dd S(\xi').
\end{align*}
Before evaluation at the boundary,
\[
 \partial_{x_a}b_{3-n}
 =\ii(\partial_{x_a}c_g(\xi))|\xi|_g^{-n+2}
   -\ii(m-1)c_g(\xi)|\xi|_g^{-n}
                                      \partial_{x_a}|\xi|_g^2.
\]
Both terms vanish at $(x_0,0)$ by
Lemma~\ref{lem:boundary-symbols-new}, for every nonzero
$\xi$. Their $z$-derivatives therefore vanish as well, and
$\Phi_1=0$.

\paragraph{Case (a)(II).}
Take $r=-1$, $\ell=3-n$, $j=1$, and $k=|\alpha|=0$.
The coefficient in \eqref{eq:FGLS-new} is
$(-\ii)^2/2!=-1/2$. The product rule gives
\begin{align*}
 \Phi_2
 &=-\frac12\int_{|\xi'|_{g^{\partial M}}=1}\int_{\mathbb R}
   \trE\left[
      \partial_{x_n}\sigma_{-1}^+(wD^{-1})\,
      \partial_z^2\sigma_{3-n}(D^{-n+3})\right]\dd z\dd S(\xi')\\
 &=-\frac12\int_{|\xi'|_{g^{\partial M}}=1}\int_{\mathbb R}
   \trE\left[
     \bigl((\partial_{x_n}w)a_{-1}^+
                    +w(\partial_{x_n}a_{-1})^+\bigr)
                                \partial_z^2b_{3-n}\right]
                   \dd z\dd S(\xi').
\end{align*}
The normal derivatives of the right leading symbol are
\begin{align*}
 \partial_zb_{3-n}
 &=\frac{\ii c_g(dx_n)}{(1+z^2)^{m-1}}
        -\frac{2\ii(m-1)zc}{(1+z^2)^m},\\
 \partial_z^2b_{3-n}
 &=-\frac{4\ii(m-1)z c_g(dx_n)}{(1+z^2)^m}
     +\ii c\left[-\frac{2(m-1)}{(1+z^2)^m}
             +\frac{4m(m-1)z^2}{(1+z^2)^{m+1}}\right].
\end{align*}
For the part containing $\partial_{x_n}w$,
\eqref{eq:boundary-projections} gives
\[
 \trE(a_{-1}^+c_g(dx_n))=-\frac{\ii2^{n-1}}{z-\ii},
 \qquad
 \trE(a_{-1}^+c)=-\frac{2^{n-1}(1+\ii z)}{z-\ii}.
\]
The part containing $w$ is evaluated by the two traces of
$(\partial_{x_n}a_{-1})^+$ obtained above. Substitution,
followed by integration of the odd terms, yields
\begin{align*}
 \Phi_2
 ={}&2^n\nu_{n-2}(\partial_{x_n}w)
       \int_{\mathbb R}
       \frac{(m-1)[1-(2m-3)z^2]}{2(1+z^2)^{m+1}}\dd z\\
 &+2^n\nu_{n-2}w\eta
       \int_{\mathbb R}
       \frac{(m-1)[(2m-5)z^2-1]}{4(1+z^2)^{m+2}}\dd z\\
 ={}&2^n\nu_{n-2}a_m
       \left[\partial_{x_n}w
                    -\frac{3(2m-1)}{4(m+1)}w\eta\right].
\end{align*}
For example, the scalar coefficient in the second integral is
\[
 \frac{m-1}{4}
 \left[(2m-5)\frac{2m-1}{2(m+1)(m-1)}
       -\frac{(2m+1)(2m-1)}{2(m+1)(m-1)}\right]a_m
 =-\frac{3(2m-1)}{4(m+1)}a_m.
\]

\paragraph{Case (a)(III).}
Now $r=-1$, $\ell=3-n$, $k=1$, and $j=|\alpha|=0$.
The coefficient is again $-1/2$, and
\begin{align*}
 \Phi_3
 &=-\frac12\int_{|\xi'|_{g^{\partial M}}=1}\int_{\mathbb R}
  \trE\left[
    \partial_z\sigma_{-1}^+(wD^{-1})\,
    \partial_z\partial_{x_n}\sigma_{3-n}(D^{-n+3})\right]
                     \dd z\dd S(\xi')\\
 &=-\frac w2\int_{|\xi'|_{g^{\partial M}}=1}\int_{\mathbb R}
  \trE\left[(\partial_z a_{-1}^+)
                          \partial_z\partial_{x_n}b_{3-n}\right]
                     \dd z\dd S(\xi').
\end{align*}
Differentiation before restriction to the boundary gives
\begin{align*}
 \partial_{x_n}b_{3-n}
 &=-\ii\eta\left[
       \frac{\sum_{a<n}\xi_a\iot(\partial_a)}{(1+z^2)^{m-1}}
                   +\frac{(m-1)c}{(1+z^2)^m}\right],\\
 \partial_z\partial_{x_n}b_{3-n}
 &=-\ii\eta(m-1)\left[
     -\frac{2z\sum_{a<n}\xi_a\iot(\partial_a)}{(1+z^2)^m}
     +\frac{c_g(dx_n)}{(1+z^2)^m}
     -\frac{2mz c}{(1+z^2)^{m+1}}\right].
\end{align*}
Use $\partial_z a_{-1}^+
=-[c_g(\xi')+\ii c_g(dx_n)]/[2(z-\ii)^2]$ and the
two-factor traces. After the odd terms are integrated out,
\begin{align*}
 \Phi_3
 &=2^n\nu_{n-2}w\eta\int_{\mathbb R}
       \frac{(m-1)[(2m-3)z^2+1]}{4(1+z^2)^{m+2}}\dd z\\
 &=2^n\nu_{n-2}a_m\frac{(2m-1)^2}{4(m+1)}w\eta.
\end{align*}
The last equality uses
$(2m-3)+(2m+1)=2(2m-1)$ in the scalar integrals of the
preceding subsection.

\paragraph{Case (b).}
Here $r=-2$, $\ell=3-n$, and $j=k=|\alpha|=0$.
The coefficient in \eqref{eq:FGLS-new} is $-\ii$, so
\begin{align*}
 \Phi_4
 &=-\ii\int_{|\xi'|_{g^{\partial M}}=1}\int_{\mathbb R}
    \trE\left[\sigma_{-2}^+(wD^{-1})\,
                      \partial_z\sigma_{3-n}(D^{-n+3})\right]
                           \dd z\dd S(\xi')\\
 &=-\ii w\int_{|\xi'|_{g^{\partial M}}=1}\int_{\mathbb R}
                 \trE[a_{-2}^+\partial_zb_{3-n}]\dd z\dd S(\xi').
\end{align*}
By \eqref{eq:boundary-inverse},
$a_{-2}=\left.a_{-2}\right|_{\Psi=0}
+c\Psi c/(1+z^2)^2$.
For the unperturbed part,
\eqref{eq:boundary-subleading-projected} gives directly
\begin{align*}
 &-\ii\trE\left[(\left.a_{-2}\right|_{\Psi=0})^+
                                          \partial_zb_{3-n}\right]\\
 &\qquad=\ii 2^{n-2}\eta\left[
       \frac1{(z-\ii)^3(1+z^2)^{m-1}}
        -\frac{2(m-1)z}{(z-\ii)^2(1+z^2)^m}\right].
\end{align*}
Its part even in $z$ is
$2^{n-2}\eta[1+(4m-7)z^2]/(1+z^2)^{m+2}$.

For the perturbation, the Clifford relations imply
\begin{align*}
 c\,c_g(\xi')c&=(z^2-1)c_g(\xi')-2z c_g(dx_n),\\
 c\,c_g(dx_n)c&=(1-z^2)c_g(dx_n)-2z c_g(\xi').
\end{align*}
Taking the trace after multiplication by $\Psi$, and projecting,
we obtain
\begin{align*}
 \trE\left[\left(\frac{c\Psi c}{(1+z^2)^2}\right)^+
                                                c_g(\xi')\right]
 &=\frac{\trE(c_g(\xi')\Psi)+\ii\trE(c_g(dx_n)\Psi)}
                                      {2(z-\ii)^2},\\
 \trE\left[\left(\frac{c\Psi c}{(1+z^2)^2}\right)^+
                                                c_g(dx_n)\right]
 &=\frac{\ii\trE(c_g(\xi')\Psi)-\trE(c_g(dx_n)\Psi)}
                                      {2(z-\ii)^2}.
\end{align*}
The terms containing $\trE(c_g(\xi')\Psi)$ are odd in $\xi'$.
Substituting the remaining terms in $\Phi_4$ gives
\begin{align*}
 \Phi_4
 ={}&2^n\nu_{n-2}w\eta\int_{\mathbb R}
              \frac{1+(4m-7)z^2}{4(1+z^2)^{m+2}}\dd z\\
 &+\nu_{n-2}w\trE(c_g(dx_n)\Psi)\int_{\mathbb R}
              \frac{1+(2m-3)z^2}{2(1+z^2)^{m+1}}\dd z\\
 ={}&2^n\nu_{n-2}a_m\frac{3(2m-1)}{4(m+1)}w\eta
       +2\nu_{n-2}a_mw\trE(c_g(dx_n)\Psi).
\end{align*}
Both scalar coefficients follow from \eqref{eq:boundary-beta}:
in the first one $(2m+1)+(4m-7)=6(m-1)$, and in the second
$(2m-1)+(2m-3)=4(m-1)$.

\paragraph{Case (c).}
Finally, $r=-1$, $\ell=2-n$, and $j=k=|\alpha|=0$,
with coefficient $-\ii$. Thus
\begin{align*}
 \Phi_5
 &=-\ii\int_{|\xi'|_{g^{\partial M}}=1}\int_{\mathbb R}
  \trE\left[\sigma_{-1}^+(wD^{-1})\,
                       \partial_z\sigma_{2-n}(D^{-n+3})\right]
                            \dd z\dd S(\xi')\\
 &=-\ii w\int_{|\xi'|_{g^{\partial M}}=1}\int_{\mathbb R}
                 \trE[a_{-1}^+\partial_zb_{2-n}]\dd z\dd S(\xi').
\end{align*}
The symbol \eqref{eq:boundary-right-symbols} decomposes as
\[
 b_{2-n}=\left.b_{2-n}\right|_{\Psi=0}
       +\frac{\Psi}{(1+z^2)^{m-1}}
       +\frac{(m-1)\{c,\Psi\}c}{(1+z^2)^m}.
\]
For the metric part, the last two lines of
\eqref{eq:boundary-subleading-traces} imply
\begin{align*}
 &\trE\left[a_{-1}^+\partial_z
                      \left(\left.b_{2-n}\right|_{\Psi=0}\right)\right]\\
 &\quad=
 \frac{2^{n-2}\eta(m-1)}{z-\ii}\frac d{dz}
 \left[
 \frac{z[z^2-(2m-1)]+\ii[1-(2m-1)z^2]}{(1+z^2)^{m+1}}
 \right].
\end{align*}
The prefactor $(z-\ii)^{-1}$ is not differentiated, since
the derivative acts only on the right symbol.
For the perturbation part, cyclicity and $c^2=-(1+z^2)\Id_E$
give
\begin{align*}
 \trE\bigl(c_g(\xi')\{c,\Psi\}c\bigr)
    &=-2\trE(c_g(\xi')\Psi)-2z\trE(c_g(dx_n)\Psi),\\
 \trE\bigl(c_g(dx_n)\{c,\Psi\}c\bigr)
    &=-2z\trE(c_g(\xi')\Psi)-2z^2\trE(c_g(dx_n)\Psi).
\end{align*}
After integration over the tangential sphere, the traces
containing $c_g(\xi')\Psi$ vanish. Differentiating the two
remaining scalar expressions and integrating the odd terms in $z$
out therefore gives
\begin{align*}
 \Phi_5
 ={}&2^n\nu_{n-2}w\eta\int_{\mathbb R}
    \frac{(m-1)(2m-1)[(2m-1)z^2-1]}{4(1+z^2)^{m+2}}\dd z\\
 &+\nu_{n-2}w\trE(c_g(dx_n)\Psi)\int_{\mathbb R}
    \frac{(m-1)[(2m-3)z^2-1]}{(1+z^2)^{m+1}}\dd z\\
 ={}&-2^n\nu_{n-2}a_m\frac{(2m-1)^2}{4(m+1)}w\eta
        -2\nu_{n-2}a_mw\trE(c_g(dx_n)\Psi).
\end{align*}
Here the differences of the scalar numerators are
$(2m-1)-(2m+1)=-2$ and
$(2m-3)-(2m-1)=-2$, respectively.

\begin{proposition}[Weighted boundary KKW formula]
\label{prop:boundary-KKW}
Under \eqref{eq:boundary-collar}, the residue
\eqref{eq:boundary-B} is
\begin{equation}
\label{eq:boundary-B-result}
\begin{aligned}
 \mathcal B_t(f,N)
 ={}&2^n\nu_{n-1}\int_M fN
     \left(-\frac{n-2}{24}R_g-\frac{n-2}{2}v_\Psi\right)\dd\vol_g\\
 &+2^n\nu_{n-1}\frac{n-2}{2n}
       \int_{\partial M}\partial_{x_n}(fN)\dd\vol_{g^{\partial M}}.
\end{aligned}
\end{equation}
\end{proposition}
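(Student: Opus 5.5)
The proof splits $\mathcal B_t(f,N)$ according to \eqref{eq:boundary-split} with $U=fND^{-1}$ and $V=D^{-n+3}$. The interior part and the boundary part are handled separately.

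\emph{Interior part.} Modulo smoothing operators the interior product is $UV=fN|D|^{2-n}$. The residue density $\operatorname{res}_x$ is local, so the symbol computation leading to Proposition~\ref{prop:weighted-KKW} applies pointwise on $M$. That computation goes through the heat coefficient of Lemma~\ref{lem:heat-residue} and the traced potential of Lemma~\ref{lem:potential}, with weight $fN$ in place of $f$. It gives
\[
\int_M\operatorname{res}_x(UV)=2^n\nu_{n-1}\int_M fN\Bigl(-\tfrac{n-2}{24}R_g-\tfrac{n-2}{2}v_\Psi\Bigr)\dd\vol_g .
\]
One point needs care. On a closed manifold, the pointwise trace identity in Lemma~\ref{lem:potential} was already local: no integration by parts was used. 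Likewise, the diagonal heat coefficient $a_2$ is a local density. So the interior density is exactly the displayed integrand, with no hidden divergence term that would otherwise leak into the boundary.

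\emph{Boundary part.} Here I sum the five contributions computed above, writing $w=fN$ and factoring out $2^n\nu_{n-2}a_m$:
\begin{itemize}
\item $\Phi_1=0$;
\item $\Phi_2$ contributes $\partial_{x_n}w-\tfrac{3(2m-1)}{4(m+1)}w\eta$;
\item $\Phi_3$ contributes $+\tfrac{(2m-1)^2}{4(m+1)}w\eta$;
\item $\Phi_4$ contributes $+\tfrac{3(2m-1)}{4(m+1)}w\eta$, together with the $\Psi$-term $2\nu_{n-2}a_m w\,\trE(c_g(dx_n)\Psi)$;
\item $\Phi_5$ contributes $-\tfrac{(2m-1)^2}{4(m+1)}w\eta$, together with the $\Psi$-term $-2\nu_{n-2}a_m w\,\trE(c_g(dx_n)\Psi)$.
\end{itemize}
The $\eta$-terms cancel in pairs: $\Phi_2$ against $\Phi_4$, and $\Phi_3$ against $\Phi_5$. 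The perturbation terms of $\Phi_4$ and $\Phi_5$ also cancel. What remains is
\[
\Phi=2^n\nu_{n-2}a_m\,\partial_{x_n}(fN).
\]
By \eqref{eq:boundary-am}, $\nu_{n-2}a_m=\tfrac{n-2}{2n}\nu_{n-1}$. Integrating over $\partial M$ against $\dd\vol_{g^{\partial M}}$ then gives the boundary term in \eqref{eq:boundary-B-result}.

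\emph{Main obstacle.} The hard step is not the summation but confirming that the five cases $a+b+j+k+|\alpha|=1$ really exhaust \eqref{eq:FGLS-new}, and that each $\Phi_i$ was evaluated at a general boundary point. The point $x_0\in\partial M$ was arbitrary, and the computed $\Phi$ is built from invariant quantities: $\eta$, the normal derivative, and $\trE(c_g(dx_n)\Psi)$. So the pointwise formula holds globally on $\partial M$.

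I would also double-check that the collar hypothesis $h(0)=1$ makes $g^{\partial M}$-normal coordinates compatible with the symbol expansions of Lemma~\ref{lem:boundary-symbols-new}. Finally, I would check that tangential derivatives of $w$ do not appear. They can only enter through case (a)(I) via $\partial_{x_a}$ acting on the right factor, which does not carry $w$. Hence they do not appear.
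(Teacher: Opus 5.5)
Your proposal is correct and follows the paper's route. You identify the interior density of $fN|D|^{2-n}$ with the weighted KKW density of Proposition~\ref{prop:weighted-KKW}, then sum the five boundary contributions, grouping them as $\Phi_2+\Phi_4$ and $\Phi_3+\Phi_5$ exactly as the paper does, so that the $\eta$- and $\Psi$-terms cancel and $2^n\nu_{n-2}a_m\,\partial_{x_n}(fN)=2^n\nu_{n-1}\frac{n-2}{2n}\,\partial_{x_n}(fN)$ remains.
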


\begin{proof}
The five contributions satisfy
\begin{align*}
 \Phi_1&=0,\\
 \Phi_2+\Phi_4
    &=2^n\nu_{n-2}a_m\partial_{x_n}w
          +2\nu_{n-2}a_mw\trE(c_g(dx_n)\Psi),\\
 \Phi_3+\Phi_5
    &=-2\nu_{n-2}a_mw\trE(c_g(dx_n)\Psi).
\end{align*}
Thus the terms containing $h'(0)$ and $\Psi$ cancel, and
\[
 \Phi(wD^{-1},D^{-n+3})
   =2^n\nu_{n-2}a_m\partial_{x_n}w
   =2^n\nu_{n-1}\frac{n-2}{2n}\partial_{x_n}(fN).
\]
The interior product is $fN|D|^{2-n}$ modulo smoothing
operators. Its local density is the weighted KKW density
used in Proposition~\ref{prop:weighted-KKW}. Adding it through
\eqref{eq:boundary-split} proves the formula.
\end{proof}

\subsection{The commutator residue}

We next apply \eqref{eq:FGLS-new} to
$U=fPD^{-5}$ and $V=D^{-n+3}$.
Their orders are at most $-2$ and $3-n$. Put $r=-2-a$
and $\ell=3-n-b$, where $a,b\ge0$. The degree equation is
\[
 (-2-a)+(3-n-b)-j-k-|\alpha|-1=-n
 \quad\Longleftrightarrow\quad a+b+j+k+|\alpha|=0.
\]
Thus only $r=-2$, $\ell=3-n$, and $j=k=|\alpha|=0$
contribute, with coefficient $-\ii$. Denoting the resulting
scalar boundary coefficient by $\Phi_C$, we have
\begin{equation}
\label{eq:commutator-one-case}
 \Phi_C=-\ii\int_{|\xi'|_{g^{\partial M}}=1}\int_{\mathbb R}
 \trE\left[\pi_z^+\sigma_{-2}(fPD^{-5})\,
                         \partial_z\sigma_{3-n}(D^{-n+3})\right]
                                      \dd z\dd S(\xi').
\end{equation}
This order count is the same simplification used in the boundary
spectral-torsion calculation of
\cite[Section~4.3]{WangWang2025Torsion}. The principal symbol
of the present insertion is computed as follows.

\paragraph{The leading symbol of the left factor.}
Because $N^{-1}$ is scalar,
\begin{align*}
 \sigma_1([A,N^{-1}])
 &=\sum_j\partial_{\xi_j}(|\xi|_g^2)
                          D_{x_j}(N^{-1})\Id_E\\
 &=-2\ii g^{ij}\partial_i(N^{-1})\xi_j\Id_E.
\end{align*}
Multiplying by the principal symbols of the other factors
in $P=NA[A,N^{-1}]N$ gives
\begin{align*}
 \sigma_3(P)
 &=N|\xi|_g^2
       \bigl[-2\ii g^{ij}\partial_i(N^{-1})\xi_j\bigr]N\Id_E\\
 &=2\ii|\xi|_g^2\langle\dd N,\xi\rangle_g\Id_E.
\end{align*}
Furthermore, $D^{-5}=A^{-3}D$ modulo smoothing operators,
so $\sigma_{-5}(D^{-5})=\ii c_g(\xi)|\xi|_g^{-6}$.
The degree $-2$ symbol of their product is therefore
\begin{equation}
\label{eq:boundary-commutator-symbol}
 \sigma_{-2}(fPD^{-5})
   =-2f\langle\dd N,\xi\rangle_g c_g(\xi)|\xi|_g^{-4}.
\end{equation}
Only principal symbols occur in this identity. In particular,
neither a derivative of $f$ nor the zero-order perturbation
$\Psi$ appears in the boundary integrand.

\paragraph{Projection, trace, and integration.}
At the boundary, with $|\xi'|_{g^{\partial M}}=1$,
\[
 \langle\dd N,\xi\rangle_g
     =\sum_{a<n}(\partial_{x_a}N)\xi_a
                            +z\partial_{x_n}N.
\]
The traces involving the tangential part reduce to a scalar
function of $z$ times
$\sum_{a<n}(\partial_{x_a}N)\xi_a$:
the contractions are $\trE(c_g(\xi')^2)=-2^n$,
$\trE(c_g(dx_n)^2)=-2^n$, and
$\trE(c_g(\xi')c_g(dx_n))=0$.
That part is odd in $\xi'$ and has zero sphere integral.
For the normal part, partial fractions give
\begin{align*}
 \pi_z^+\left(\frac{-2z\,c_g(\xi'+zdx_n)}{(1+z^2)^2}\right)
 &=\frac{\ii c_g(\xi')-c_g(dx_n)}{2(z-\ii)^2}
               +\frac{\ii c_g(dx_n)}{2(z-\ii)},\\
 \pi_z^+\frac{z^2}{(1+z^2)^2}
 &=\frac1{4(z-\ii)^2}+\frac1{4\ii(z-\ii)}.
\end{align*}
The second equality follows by subtracting
$\pi_z^+(1+z^2)^{-2}$ from $\pi_z^+(1+z^2)^{-1}$;
the first also uses the projection of $z/(1+z^2)^2$.
Substitution in \eqref{eq:commutator-one-case} now gives
\begin{align*}
 \Phi_C
 ={}&-\ii f\partial_{x_n}N
   \int_{|\xi'|_{g^{\partial M}}=1}\int_{\mathbb R}
   \trE\left[
   \left(\frac{\ii c_g(\xi')-c_g(dx_n)}{2(z-\ii)^2}
                    +\frac{\ii c_g(dx_n)}{2(z-\ii)}\right)
                 \partial_z b_{3-n}\right]\dd z\dd S(\xi')\\
 ={}&2^{n-1}\nu_{n-2}f\partial_{x_n}N
   \int_{\mathbb R}\left[
     \left(\frac1{(z-\ii)^2}-\frac\ii{z-\ii}\right)
                                          \frac1{(1+z^2)^{m-1}}
    -\frac{2(m-1)z(1-\ii z)}{(z-\ii)(1+z^2)^m}
                    \right]\dd z\\
 ={}&-2^n\nu_{n-2}f\partial_{x_n}N
       \int_{\mathbb R}\frac{(2m-3)z^2}{(1+z^2)^{m+1}}\dd z\\
 ={}&-2^n\nu_{n-2}a_m\frac{2m-3}{m-1}f\partial_{x_n}N\\
 ={}&-2^n\nu_{n-1}\frac{n-3}{n}f\partial_{x_n}N.
\end{align*}
In the third line the odd part of the rational function has
zero integral. The last two lines use
\eqref{eq:boundary-beta} and \eqref{eq:boundary-am}.

\begin{proposition}[Boundary commutator correction]
\label{prop:boundary-commutator}
Under \eqref{eq:boundary-collar}, the residue
\eqref{eq:boundary-C} is
\begin{equation}
\label{eq:boundary-C-result}
\begin{aligned}
 \mathcal C_t(f,N)
 ={}&2^n\nu_{n-1}\int_M f
     \left(\frac{n+4}{n}\Div_g\grad_gN
                     -\frac4{nN}|\dd N|_g^2\right)\dd\vol_g\\
 &-2^n\nu_{n-1}\frac{n-3}{n}
       \int_{\partial M}f\partial_{x_n}N\dd\vol_{g^{\partial M}}.
\end{aligned}
\end{equation}
\end{proposition}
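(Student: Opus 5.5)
The proof assembles two ingredients already at hand: the splitting formula \eqref{eq:boundary-split}, and the one-case boundary computation preceding the statement. We apply \eqref{eq:boundary-split} with $U=fPD^{-5}$ and $V=D^{-n+3}$. Two terms must then be identified: the interior density of $UV$, and the boundary coefficient $\Phi(U,V)=\Phi_C$.

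For the interior term, we first note that $D^{-5}D^{-n+3}=D^{-n-2}=|D|^{-n-2}=W$ modulo smoothing operators. Hence $UV=fPW$ modulo $\Psi^{-\infty}$. Smoothing operators contribute neither to $\operatorname{res}_x$ nor to the boundary coefficient, since the latter is built from finitely many homogeneous symbols of each factor. Next, left multiplication by the scalar $f$ gives $\operatorname{res}_x(fPW)=f\operatorname{res}_x(PW)$. The pointwise formula of Proposition~\ref{prop:third-order} is local, because it was derived at a normal-coordinate center of an arbitrary interior point. It therefore applies verbatim on the interior of $M$ and extends continuously up to the boundary. Integrating over $M$ gives the first line of \eqref{eq:boundary-C-result}. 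No integration by parts is performed here, so the divergence $\Div_g\grad_g N$ stays in the interior integrand. This explains why the formula keeps that term rather than the closed-manifold version of Corollary~\ref{cor:weighted-lapse}.

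For the boundary term, the order count shows that only the case $r=-2$, $\ell=3-n$, $j=k=|\alpha|=0$ survives, which gives \eqref{eq:commutator-one-case}. We then substitute the leading symbol \eqref{eq:boundary-commutator-symbol}, the right symbol $b_{3-n}$ from \eqref{eq:boundary-right-symbols}, and the projections and exterior traces already listed. The tangential gradient part is odd in $\xi'$ and vanishes on the sphere. The normal part reduces to the scalar integral evaluated above. Using \eqref{eq:boundary-beta} and \eqref{eq:boundary-am}, this gives $\Phi_C=-2^n\nu_{n-1}\frac{n-3}{n}f\partial_{x_n}N$. Because $\Phi_C$ is computed at an arbitrary $x_0\in\partial M$ in $g^{\partial M}$-normal coordinates, it is a globally defined density. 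Integrating it against $\dd\vol_{g^{\partial M}}$ yields the second line.

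The main point requiring care is justifying that only principal symbols enter $\Phi_C$. The collar derivative $\eta$ and the perturbation $\Psi$ could a priori appear through $\sigma_{-3}$ of the left factor or $\sigma_{2-n}$ of the right factor. The degree equation $a+b+j+k+|\alpha|=0$ excludes both, and we will check this explicitly. We will also verify the sign and normalization of the $\xi_n$-projection of $-2z\,c/(1+z^2)^2$, since an error there would change the coefficient $(n-3)/n$. As a consistency check, we would confirm that the $z$-integrand's even part is exactly $-2(2m-3)z^2/(1+z^2)^{m+1}$ times $2^{n-1}$.
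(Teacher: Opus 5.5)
Your proposal is correct and follows the paper's own proof. You split $\Wresb$ via \eqref{eq:boundary-split} with $U=fPD^{-5}$ and $V=D^{-n+3}$, take the interior density to be $f$ times the pointwise formula of Proposition~\ref{prop:third-order} (keeping $\Div_g\grad_gN$ because no integration by parts is done), and add the single surviving boundary case $r=-2$, $\ell=3-n$, which gives $\Phi_C=-2^n\nu_{n-1}\frac{n-3}{n}f\partial_{x_n}N$; your planned check is also right, since the even part of the bracketed $z$-integrand is $-2(2m-3)z^2(1+z^2)^{-m-1}$ against the prefactor $2^{n-1}\nu_{n-2}f\partial_{x_n}N$.
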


\begin{proof}
The interior product equals $fP|D|^{-n-2}$ modulo smoothing
operators. Its density is the pointwise formula of
Proposition~\ref{prop:third-order}, multiplied by $f$.
Adding the boundary coefficient just calculated through
\eqref{eq:boundary-split} proves the result.
\end{proof}

\subsection{The weighted formula with boundary}

The velocity operator
\[
 T_t=N^{-1}|D|^{-n-4}
       \left(\frac{n(n+2)}8(D\dot D)^2
                   +\frac{n(n-4)}8D^2\dot D^2\right)
\]
has order at most $-n$ and satisfies the transmission condition.
We use its single truncation $\pi^+(fT_t)$, whose singular
Green component is zero. Combining it with the two
factorizations above gives the following definition.

\begin{definition}[Weighted boundary functional]
\label{def:boundary-functional}
Under \eqref{eq:boundary-collar}, define
\begin{equation}
\label{eq:boundary-functional}
 \mathscr L_t^\partial(f)
 :=\Wresb[\pi^+(fT_t)]
       +\frac6{n-2}\mathcal B_t(f,N)
       +\frac{n(n-2)}{16}\mathcal C_t(f,N).
\end{equation}
\end{definition}
Equivalently, by \eqref{eq:boundary-third-identity},
\begin{align*}
 \mathscr L_t^\partial(f)
 ={}&\Wresb[\pi^+(fT_t)]
       +\left(\frac6{n-2}+\frac{n(n-2)}{16}\right)
                                       \mathcal B_t(f,N)\\
 &-\frac{n(n-2)}{16}\Wresb\left[
       \pi^+(fNAN^{-1}AN D^{-5})\circ\pi^+(D^{-n+3})\right].
\end{align*}
This is the three-term expression \eqref{eq:functional} with
the specified boundary factorizations; the equality follows
from the operator identity before truncation and linearity.

\begin{theorem}[Weighted Hamiltonian-type formula with boundary]
\label{thm:boundary-main}
Let $M^{2m}$ be compact and oriented with smooth boundary,
$m\ge2$, and let the metric family satisfy
\eqref{eq:boundary-collar}. For smooth $N>0$, a real smooth
function $f$, and the smooth self-adjoint perturbation $\Psi_t$,
the functional \eqref{eq:boundary-functional} is
\begin{equation}
\label{eq:boundary-final}
\begin{aligned}
 \mathscr L_t^\partial(f)
 ={}&2^n\nu_{n-1}\int_M
 f\left[
 \frac{(\trg\dot g)^2+2|\dot g|_g^2}{16N}
 -\frac14NR_g-3Nv_\Psi-\frac{n-2}{4N}|\dd N|_g^2
 \right]\dd\vol_g\\
 &-2^n\nu_{n-1}\frac{(n-2)(n+4)}{16}
       \int_M\langle\dd f,\dd N\rangle_g\dd\vol_g\\
 &+2^n\nu_{n-1}\int_{\partial M}
 \left[\frac3n\partial_{x_n}(fN)
       -\frac{(n-2)(2n+1)}{16}f\partial_{x_n}N\right]
                                          \dd\vol_{g^{\partial M}}.
\end{aligned}
\end{equation}
Here $v_\Psi$ is the scalar perturbation term used in
Theorem~\ref{thm:general-main}, and $\partial_{x_n}$ is
the inward unit normal along the boundary.
\end{theorem}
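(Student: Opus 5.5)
The proof is pure bookkeeping. I would insert Proposition~\ref{prop:boundary-KKW} and Proposition~\ref{prop:boundary-commutator} into Definition~\ref{def:boundary-functional}, handle the velocity term separately, and then integrate the divergence term of $\mathcal C_t$ by parts. On a manifold with boundary, this integration by parts produces an additional boundary term.

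First, the velocity term. The operator $fT_t$ has order at most $-n$ and satisfies the transmission condition, and the singular Green part of the single truncation $\pi^+(fT_t)$ is zero. Hence $\Wresb[\pi^+(fT_t)]=\int_M\operatorname{res}_x(fT_t)$. The density is local and is given by the principal-symbol calculation of Proposition~\ref{prop:velocity}. That calculation is pointwise at a normal-coordinate center and never used closedness, so it applies on the interior of $M$. By \eqref{eq:kinetic-result} this yields the kinetic integrand $f\bigl((\trg\dot g)^2+2|\dot g|_g^2\bigr)/(16N)$, multiplied by $2^n\nu_{n-1}$.

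Second, the curvature term. Multiplying \eqref{eq:boundary-B-result} by $6/(n-2)$ gives the interior contribution $2^n\nu_{n-1}\int_M f\bigl(-\tfrac14NR_g-3Nv_\Psi\bigr)$, exactly as in the proof of Theorem~\ref{thm:general-main}. It also gives the boundary term $2^n\nu_{n-1}\tfrac3n\int_{\partial M}\partial_{x_n}(fN)$.

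Third, the commutator term. Multiplying \eqref{eq:boundary-C-result} by $n(n-2)/16$ gives three pieces:
\begin{itemize}
\item the interior term $2^n\nu_{n-1}\int_M\bigl[\tfrac{(n-2)(n+4)}{16}f\Div_g\grad_gN-\tfrac{n-2}{4N}f|\dd N|_g^2\bigr]$;
\item the boundary term $-2^n\nu_{n-1}\tfrac{(n-2)(n-3)}{16}\int_{\partial M}f\partial_{x_n}N$;
\item no further contribution, since \eqref{eq:boundary-C-result} has only these two parts.
\end{itemize}

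The only point that needs care is the divergence theorem with the inward normal. Applied to $X=f\grad_gN$, it reads
\[
 \int_M\Div_g(f\grad_gN)\dd\vol_g=-\int_{\partial M}f\,\partial_{x_n}N\dd\vol_{g^{\partial M}},
\]
because $\partial_{x_n}$ is the inward unit normal and the collar metric \eqref{eq:boundary-collar} has $g_{nn}=1$, $g_{an}=0$. Therefore
\[
 \int_Mf\Div_g\grad_gN=-\int_M\langle\dd f,\dd N\rangle_g-\int_{\partial M}f\partial_{x_n}N.
\]
This reproduces the mixed gradient term $-\tfrac{(n-2)(n+4)}{16}\langle\dd f,\dd N\rangle_g$ of Theorem~\ref{thm:general-main}. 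It also contributes the boundary term $-\tfrac{(n-2)(n+4)}{16}f\partial_{x_n}N$.

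Finally, I would collect the boundary coefficients of $f\partial_{x_n}N$:
\[
 -\tfrac{n-2}{16}\bigl[(n-3)+(n+4)\bigr]=-\tfrac{(n-2)(2n+1)}{16}.
\]
Together with $\tfrac3n\partial_{x_n}(fN)$, this gives \eqref{eq:boundary-final}. I expect no genuine obstacle. The main risk is sign and orientation conventions: the inward normal, the sign convention in \eqref{eq:boundary-split}, and the fact that the residues were computed with $\dd\vol_{g^{\partial M}}$ at $h(0)=1$. I would recheck these against the $f=1$, empty-boundary reduction to Theorem~\ref{thm:general-main}.
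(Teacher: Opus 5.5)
Your proposal is correct and follows the paper's own proof essentially step for step. You substitute the boundary results for $\mathcal B_t$ and $\mathcal C_t$ into the definition of $\mathscr L_t^\partial(f)$, treat $\Wresb[\pi^+(fT_t)]$ as the purely interior velocity density, and integrate $f\Div_g\grad_gN$ by parts with the inward normal $\partial_{x_n}$; combining $-\frac{(n-2)(n-3)}{16}$ with $-\frac{(n-2)(n+4)}{16}$ then gives the boundary coefficient $-\frac{(n-2)(2n+1)}{16}$, with all your intermediate coefficients matching the paper's.
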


\begin{proof}
The velocity calculation \eqref{eq:kinetic-result} is local;
its integral over $M$ is $\Wresb[\pi^+(fT_t)]$.
Substituting \eqref{eq:boundary-B-result} and
\eqref{eq:boundary-C-result} in
\eqref{eq:boundary-functional}, the interior contribution is
\begin{align*}
 &2^n\nu_{n-1}\int_M f\left[
 \frac{(\trg\dot g)^2+2|\dot g|_g^2}{16N}
 -\frac14NR_g-3Nv_\Psi-\frac{n-2}{4N}|\dd N|_g^2
 \right]\dd\vol_g\\
 &\qquad
 +2^n\nu_{n-1}\frac{(n-2)(n+4)}{16}
       \int_M f\Div_g\grad_gN\dd\vol_g,
\end{align*}
and the singular Green contribution is
\[
 2^n\nu_{n-1}\int_{\partial M}
 \left[\frac3n\partial_{x_n}(fN)
       -\frac{(n-2)(n-3)}{16}f\partial_{x_n}N\right]
                                      \dd\vol_{g^{\partial M}}.
\]
Since the outward unit normal is $-\partial_{x_n}$,
the product rule and the divergence theorem give
\begin{align*}
 \Div_g(f\grad_gN)
     &=\langle\dd f,\dd N\rangle_g+f\Div_g\grad_gN,\\
 \int_M f\Div_g\grad_gN\dd\vol_g
     &=-\int_M\langle\dd f,\dd N\rangle_g\dd\vol_g
       -\int_{\partial M}f\partial_{x_n}N\dd\vol_{g^{\partial M}}.
\end{align*}
The coefficient of $f\partial_{x_n}N$ in the total boundary
term is consequently
\[
 -\frac{(n-2)(n-3)}{16}
 -\frac{(n-2)(n+4)}{16}
 =-\frac{(n-2)(2n+1)}{16}.
\]
This proves \eqref{eq:boundary-final}.
\end{proof}

When $\partial M=\varnothing$, the formula reduces to
Theorem~\ref{thm:general-main}. If $f=1$, its total boundary term is
\[
 2^n\nu_{n-1}\left[\frac3n-\frac{(n-2)(2n+1)}{16}\right]
       \int_{\partial M}\partial_{x_n}N\dd\vol_{g^{\partial M}}.
\]
More generally, both the singular Green contribution and the
boundary term from integration by parts vanish when
$\partial_{x_n}f=\partial_{x_n}N=0$ along $\partial M$.
The time-integrated action is
$\int_{t_0}^{t_1}\mathscr L_t^\partial(f_t)\dd t$
for a compact interval $[t_0,t_1]\subset I$.

\section*{Acknowledgements}

This work was supported by Science and Technology Development Plan Project of Jilin Province,
China: No.20260102245JC, NSFC (Grant Nos. 12301063 and 11771070) and 2024 Liaoning Provincial Natural Science Foundation Program (Ph.D. Research Start-up Project) (Grant No. 2024-BS-205).

\end{document}